\newcommand\restr[2]{\ensuremath{\left.#1\right|_{#2}}}
\theoremstyle{plain}
\newtheorem{thm}{Theorem}[section]
\newtheorem{cor}[thm]{Corollary}
\newtheorem{lem}[thm]{Lemma}
\newtheorem{prop}[thm]{Proposition}
\newtheorem{rem}[thm]{Remark}
\theoremstyle{definition}
\newtheorem{defn}[thm]{Definition}
\numberwithin{equation}{section}
\def\d{\delta}
\def\d{\delta}
\def\RR{{\mathbb R}}
\def\ZZ{{\mathbb Z}}
\def\d{{\mathrm{d}}}
\def\id{\mathrm{id}}
\def\Sd{{\mathbb{S}^3}}
\def\SS{{\mathbb{S}}}
\def\Diff{\mathrm{Diff}}
\newcommand{\gc}{\gamma}
\newcommand{\dgc}{\dot{\gc}}
\setlist[itemize]{noitemsep, topsep=0pt}
\newcommand{\vast}{\bBigg@{2}}
\newcommand{\Vast}{\bBigg@{5}}
\newcommand{\RNum}[1]{\uppercase\expandafter{\romannumeral #1\relax}}
\title[The magnetic Euler--Arnold equation]{On geometric hydrodynamics and \\infinite--dimensional magnetic systems} 
\author{L. Maier}
\address{Faculty of Mathematics and Computer Science,
	University of Heidelberg,
	Im Neuenheimer Field 205,
	69120 Heidelberg, Germany}
\email{lmaier@mathi.uni-heidelberg.de}
\keywords{}
\subjclass[2020]{}
\begin{document}
\maketitle

	\renewcommand{\abstractname}{Abstract}
	\begin{abstract}
	In this article, we combine V. Arnold's celebrated approach via the Euler–Arnold equation—describing the geodesic flow on a Lie group equipped with a right-invariant metric \cite{Arnold66}—with his formulation of the motion of a charged particle in a magnetic field \cite{ar61}. We introduce the \emph{magnetic Euler–Arnold equation}, which is the Eulerian form of the magnetic geodesic flow for an infinite-dimensional magnetic system on a Lie group endowed with a right-invariant metric and a right-invariant closed two-form serving as the magnetic field.

As an illustration, we demonstrate that the Korteweg–de Vries equation, the generalized Camassa–Holm equation, the infinite conductivity equation, and the global quasi-geostrophic equations can all be interpreted as magnetic Euler–Arnold equations. In particular, we obtain both local and global well-posedness results for the magnetic Euler–Arnold equation associated with the global quasi-geostrophic equations.
	\end{abstract}
	\maketitle
	\renewcommand{\abstractname}{Abstract}
    \section{Introduction}
Since V.~Arnold’s seminal discovery~\cite{Arnold66}—that the Euler equations of hydrodynamics, which govern the motion of an incompressible and inviscid fluid in a fixed domain (with or without boundary), can be interpreted as the geodesic equations on the group of volume-preserving diffeomorphisms of the domain, endowed with a right-invariant Riemannian metric (specifically, the \( L^2 \)-metric)—many partial differential equations (PDEs) arising in mathematical physics have been reinterpreted within a similar geometric framework. These equations are formulated as geodesic equations on infinite-dimensional Lie groups equipped with a right-invariant Riemannian metric; see, for example,~\cite{AK98, Khesin-Mis-Mod-inf-Newton, Vi08} and the references therein.

In~\cite{Khesin-Mis-Mod-inf-Newton}, it is further shown that many PDEs in mathematical physics can be formulated as infinite-dimensional Newton equations. From a physical perspective, this provides a natural extension of the geodesic framework: while the geodesic equation describes the motion of a free particle, Newton’s equation captures the dynamics of a particle under the influence of a potential force.

From this viewpoint, a natural next step is to study the motion of a charged particle in a magnetic field. Mathematically, this problem is formulated within Hamiltonian dynamics, specifically through the theory of magnetic systems—pioneered by V.~Arnold in~\cite{ar61}. The corresponding equations of motion, known as the \emph{magnetic geodesic equations}, may be interpreted as geodesic equations modified by the \emph{Lorentz force}, induced by the presence of an external magnetic field.

In~\cite{M24}, the author constructed the first example of a PDE admitting a formulation as an infinite-dimensional magnetic geodesic equation: the so-called magnetic two-component Hunter–Saxton system. In the present paper, we show that this example fits into a broader and more general framework. By combining the ideas of V.~Arnold~\cite{ar61,Arnold66}, we introduce the notion of the \emph{magnetic Euler–Arnold equation}. This framework allows us to interpret several PDEs from fluid dynamics as magnetic Euler–Arnold equations. These include, for example, the Korteweg–de Vries equation~\eqref{e: KdV}, the generalized Camassa–Holm equation~\eqref{e: generalized Camassa–Holm}, the infinite conductivity equation~\eqref{e: infinite conductivity equation}, and the global quasi-geostrophic equations~\eqref{eq: global quasi-geostrophic equations}. In other words, these equations describe the motion of a charged particle on an infinite-dimensional manifold under the influence of an external magnetic field.

We summarize these results in \Cref{tab:magnetic_Euler_Arnold_PDEs}, where each PDE is associated with the magnetic system for which it is realized as a magnetic geodesic equation. In addition, we explicitly identify the corresponding Lorentz force, which represents the physical perturbation induced by the external magnetic field.
\begin{table}[H]
\centering
\begin{tabular}{|p{4.2cm}|p{5cm}|p{5cm}|}
\hline
\textbf{PDE} & \textbf{Magnetic system} & \textbf{Lorentz force} \\
\hline
\textit{Korteweg--de Vries equation~\eqref{e: KdV}} & 
\textit{$\mathrm{Diff}(\mathbb{S}^1)$ with $L^2$-metric and Gelfand--Fuchs cocycle (see~\Cref{c: KdV as magnetic geodesic equation})} & 
\textit{Dispersion term $a \cdot u_{xxx}$ (see~\Cref{r: viscosity in KdV is Lorenz force})} \\
\hline
\textit{Generalized Camassa--Holm equation~\eqref{e: generalized Camassa–Holm}} & 
\textit{$\mathrm{Diff}(\mathbb{S}^1)$ with $H^1$-metric and Gelfand--Fuchs cocycle (see~\Cref{C: gCH as an magnetic Euler-Arnold equation})} & 
\textit{Dispersion term (see~\Cref{r: Dispersion term in gCH})} \\
\hline
\textit{Infinite conductivity equation~\eqref{e: infinite conductivity equation}} & 
\textit{$\mathrm{Diff}_\mathrm{vol}(M)$ with $L^2$-metric and Lichnerowicz cocycle (see~\Cref{c: infinite conductivity equation as magn Euler-Arnold equation})} & 
\textit{Magnetic term $B \times u$ (see~\Cref{r: Lorenz forc in IC is magnetic drift term})} \\
\hline
\textit{Global quasi-geostrophic equations on a two-sphere~\eqref{eq: global quasi-geostrophic equations}} & 
\textit{Quantomorphism group of the 3-sphere $\mathbb{S}^3$ with right-invariant metric and trivial cocycle (see~\Cref{c: global quasi-geostrophic equations as magnetic})} & 
\textit{Correction term \( \frac{2z}{\mathrm{Ro}} + 2z h \) in~\cite{STS09, Ver09, LFEG24} (see~\Cref{r: lorenz force global quasi})} \\
\hline
\textit{Magnetic two-component Hunter--Saxton system (see~\cite[(M2HS)]{M24})} & 
\textit{Semidirect product group of diffeomorphisms and functions (see~\cite[Thm.~5.1]{M24})} & 
\textit{Rotation in an infinite-dimensional contact-type distribution (see~\cite[Eq.~5.3]{M24})} \\
\hline
\textit{Magnetic EPDiff equation (see~\cite[(MEpDiff)]{HopfRinowHalfLiegroups} and \cite[(MEpDiff)]{MaierRuscelliTonelli})} & 
\textit{Groups of diffeomorphisms on closed manifolds (see~\cite[(MEpDiff)]{HopfRinowHalfLiegroups, MaierRuscelliTonelli})} & 
\textit{Twist of the inertia operator (see~\cite[(MEpDiff)]{HopfRinowHalfLiegroups})} \\
\hline
\end{tabular}
\caption{Interpretation of selected PDEs as magnetic Euler--Arnold equations.}
\label{tab:magnetic_Euler_Arnold_PDEs}
\end{table}
Moreover, this framework allows us to interpret the Korteweg–de Vries equation~\eqref{e: KdV} as a magnetic deformation of the Burgers equation~\eqref{e: Burger}, which is the geodesic equation on $\mathrm{Diff}(\mathbb{S}^1)$ equipped with the $L^2$-metric. In this setting, the dispersion term in~\eqref{e: KdV} coincides precisely with the Lorentz force induced by the underlying infinite-dimensional magnetic system. A similar interpretation applies to the generalized Camassa–Holm equation~\eqref{e: generalized Camassa–Holm}, which may be viewed as a magnetic deformation of the Camassa–Holm equation~\eqref{e: Cammassa Holm eq}, the geodesic equation on $\mathrm{Diff}(\mathbb{S}^1)$ endowed with the $H^1$-metric.

In addition, the infinite conductivity equation~\eqref{e: infinite conductivity equation} can be interpreted as a magnetic deformation of the incompressible Euler equations. In this case, the magnetic term in~\eqref{e: infinite conductivity equation} coincides exactly with the Lorentz force defined by the associated magnetic system.

Finally, this framework enables us to interpret the global quasi-geostrophic equations\allowbreak\eqref{eq: global quasi-geostrophic equations} as an infinite-dimensional magnetic geodesic equation, where the correction term introduced in~\cite{STS09, Ver09, LFEG24} corresponds precisely to the Lorentz force of the associated magnetic system. Moreover, within this framework, we establish both local and global well-posedness for the magnetic Euler–Arnold equation corresponding to~\eqref{eq: global quasi-geostrophic equations}.
\ \\	\\
   \noindent
\textbf{Outlook:} 
A central theme of future interest is the exploration of similarities and differences between standard and magnetic geodesics. In finite-dimensional systems, the so-called Mañé critical value~\cite{Man} plays a fundamental role, serving as an energy threshold beyond which the dynamical and geometric properties of the magnetic geodesic flow typically change significantly. For finite-dimensional magnetic systems, the magnetic geodesic flow often resembles the standard geodesic flow above this threshold, whereas below it the behavior may differ substantially (see, for example,~\cite{ALBERS2025105521,Abbo13Lect,AbbMacMazzPat17,AssBenLust16,CIPP,CFP10,DMS25Contact,QuadraticgrowthLinaTom,Man,Merry2010}).

In~\cite{M24}, the author introduced a notion of the Mañé critical value for infinite-dimensional magnetic systems and illustrated it in~\cite[Thm.~7.1]{M24} using the magnetic two-component Hunter--Saxton system. It would therefore be natural to investigate whether the Mañé critical values associated with the magnetic systems listed in \Cref{tab:magnetic_Euler_Arnold_PDEs} provide new insights into the corresponding PDEs, particularly from the viewpoint of Hamiltonian dynamics.

Within the same differential-geometric framework, curvature offers another perspective. In the classical Euler--Arnold setting, the role of curvature—beginning with~\cite{Arnold66}, and especially its influence on the existence of conjugate points in diffeomorphism groups—has been studied extensively; see~\cite{AK98, Misiolek96conjtsTorus, MisiolekconjPtsKdV, Preston2006} and the references therein. A natural question is whether the recently introduced concept of magnetic curvature~\cite{Assenza2024} may play an analogous role in the magnetic Euler--Arnold setting. A recent finite-dimensional result~\cite{assenza2025conjugate} lends support to this perspective: the authors establish the existence of conjugate points along magnetic geodesics under suitable conditions on the magnetic curvature, suggesting that similar geometric phenomena might arise in the infinite-dimensional case as well.

We conclude this outlook by referring to \Cref{rem: speculative}, which raises the question of whether viewing the equations~\eqref{eq: global quasi-geostrophic equations} through the lens of exact magnetic systems and their associated action functionals could provide a fruitful approach to the study of measure-valued solutions—by analogy with the incompressible Euler equations, as explored in~\cite{DaneriFigalli2013} and the references therein.
\ \\	\\
   \noindent
\textbf{Structure of the paper:} 
In \Cref{S: 2}, we introduce the basic notions of magnetic systems and review fundamental concepts related to regular Lie groups. This provides the foundation for \Cref{d: magnetic euler equation}, where we define the magnetic Euler–Arnold equation for a magnetic system consisting of a regular Lie group equipped with a right-invariant Riemannian metric and a right-invariant closed two-form representing the magnetic field. In \Cref{t: magnetic Euler equation on g}, we prove that a curve is a magnetic geodesic of this system if and only if it satisfies the corresponding magnetic Euler–Arnold equation. This equation, which can be expressed in terms of the adjoint operator and the Lorentz force, constitutes the main theoretical contribution of the paper. Finally, in \Cref{ss: 2.3}, we relate our results to existing work in the literature.

In \Cref{S: 3}, we show that solutions of the magnetic Euler–Arnold equation correspond one-to-one with solutions of the Euler–Arnold equation on a central extension of the Lie group determined by the magnetic field, which defines a Lie algebra two-cocycle, as established in \Cref{C: relation magnetic geodesics and geodesics on central extension}. This correspondence holds only if the central extension of the Lie algebra integrates to a central extension of the Lie group—a condition that is not guaranteed in general.

In \Cref{s: 4}, we illustrate \Cref{t: magnetic Euler equation on g} by proving that the Korteweg–de Vries equation \eqref{e: KdV} and the generalized Camassa–Holm equation \eqref{e: generalized Camassa–Holm} are infinite-dimensional magnetic geodesic equations. We show that they can be viewed as magnetic deformations of the Burgers equation~\eqref{e: Burger} and the Camassa–Holm equation~\eqref{e: Cammassa Holm eq}, respectively, with the dispersion terms interpreted as infinite-dimensional Lorentz forces.

In \Cref{s: 5}, we prove that the infinite conductivity equation \eqref{e: infinite conductivity equation} is likewise an infinite-dimensional magnetic geodesic equation. We demonstrate how it may be understood as a magnetic deformation of the incompressible Euler equations~\eqref{e: Euler hydrp}, again interpreting the magnetic term as an infinite-dimensional Lorentz force.

We conclude the paper in \Cref{s: 6} by proving that the global quasi-geostrophic equations \eqref{eq: global quasi-geostrophic equations} are magnetic geodesic equations on the quantomorphism group. We interpret the correction term therein as an infinite-dimensional Lorentz force. Moreover, following the arguments in~\cite{ModinSurin2025}, we establish both local and global well-posedness for the magnetic Euler–Arnold equation associated with \eqref{eq: global quasi-geostrophic equations}.

\ \\
\noindent
\textbf{Acknowledgments:} The author is grateful to B.~Khesin for valuable comments and his continued interest in this work. In addition, the author thanks L.~Deschamps for carefully proofreading parts of the manuscript. The author also acknowledges M. Bauer, P. Michor, K. Modin, S. Preston, C. Vizman, and T. Diesz for helpful discussions. The author is indebted to the referees for their comments, which helped improve the paper.

This work was supported by the Deutsche Forschungsgemeinschaft (DFG, German Research Foundation) under grants 281869850 (RTG 2229), 390900948 (EXC-2181/1), and 281071066 (TRR 191). The author gratefully acknowledges the excellent working conditions and stimulating atmosphere at the Erwin Schrödinger International Institute for Mathematics and Physics in Vienna, during the thematic programme \emph{``Infinite-dimensional Geometry: Theory and Applications''}, where part of this work was completed.

    \section{Magnetic systems on regular Lie groups}\label{S: 2}
The aim of this subsection is to introduce an analogue of the Euler--Arnold equation for right-invariant magnetic systems—the magnetic Euler--Arnold equation—and to establish its equivalence with the corresponding equations of motion. The presentation follows the outline given in the introduction.
	\subsection{Intermezzo: Magnetic systems}\label{S: 2_Magnetic}

In the 1960s, the motion of a charged particle in a magnetic field was placed within the framework of modern dynamical systems by V.~Arnold in his pioneering work \cite{ar61}. The motion admits the following mathematical description:

\begin{defn}\label{d: magnetic system}
Let $(M,g)$ be a connected Riemannian tame Fréchet manifold, and let $\sigma \in \Omega^2(M)$ be a closed two-form. The form $\sigma$ is called a \emph{magnetic field}, and the triple $(M,g,\sigma)$ is called a \emph{magnetic system}. This structure defines a skew-symmetric bundle endomorphism $Y \colon TM \to TM$, called the \emph{Lorentz force}, by
\begin{equation}\label{e:Lorentz}
	g_q\left(Y_q u, v\right) = \sigma_q(u, v), 
	\qquad \forall\, q \in M,\ \forall\, u, v \in T_q M.
\end{equation}
A smooth curve $\gc \colon I\subseteq \mathbb{R} \to M$ is called a \emph{magnetic geodesic of strength} $s \in \mathbb{R}$ of the magnetic system $(M,g,\sigma)$ if it satisfies
\begin{equation}\label{e:mg_alt}
	\nabla_{\dot{\gc}} \dot{\gc} = s\, Y_{\gc} \dot{\gc},
\end{equation}
where $\nabla$ denotes the Levi--Civita connection associated with the metric $g$. A magnetic geodesic with strength $s = 1$ is simply referred to as a magnetic geodesic.
\end{defn}

\begin{rem}\label{r: duality magnetic geodesics of strength s and scaling magnetic form}
It follows directly from \Cref{d: magnetic system} that a curve $\gc$ is a magnetic geodesic of strength $s$ of $(M,g,\sigma)$ if and only if $\gc$ is a magnetic geodesic of $(M,g,s\cdot \sigma)$.
\end{rem}

From \eqref{e:mg_alt}, it follows that a magnetic geodesic with $s = 0$ reduces to a standard geodesic of the metric $g$. Therefore, \eqref{e:mg_alt} can be interpreted as a linear perturbation of the geodesic equation. The key point of interest is to explore the similarities and differences between standard and magnetic geodesics. 

Since $Y$ is skew-symmetric, magnetic geodesics have constant kinetic energy 
\[
E(\gamma,\dot\gamma):=\tfrac12 g_\gamma(\dot\gamma,\dot\gamma),
\]
and hence constant speed 
\[
|\dot\gamma|:=\sqrt{g_\gamma(\dot\gamma,\dot\gamma)},
\]
just as standard geodesics do. Energy conservation is a manifestation of the Hamiltonian nature of the system.

Indeed, we define the \emph{magnetic geodesic flow} on the tangent bundle by
\[
\varPhi_{g,\sigma}^t\colon TM\to TM,\quad 
(q,v)\mapsto \left( \gc_{q,v}(t),\dot{\gc}_{q,v}(t)\right),
\quad \forall t\in I\subseteq\mathbb{R},
\]
where $\gc_{q,v}(t)$ is the unique magnetic geodesic with initial values $(q,v)\in TM$. This flow admits the following Hamiltonian interpretation.

\begin{lem}[\cite{Gin}]\label{l: Ham form of magnetic flow on tangent bundle}
The magnetic geodesic flow $\varPhi^t_{g,\sigma}$ of $(M,g,\sigma)$ is the Hamiltonian flow induced by the kinetic energy $E\colon TM\to\mathbb{R}$ and the twisted symplectic form
\[
\omega_\sigma:=\mathrm{d}\lambda-\pi^*_{TM}\sigma,
\]
where $\lambda$ is the metric pullback of the canonical Liouville $1$-form from $T^*M$ to $TM$, and $\pi_{TM}\colon TM\to M$ is the canonical projection.
\end{lem}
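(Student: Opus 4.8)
The plan is to verify directly that the flow generated by the Hamiltonian vector field $X_E$ of the kinetic energy $E$ with respect to the twisted symplectic form $\omega_\sigma = \d\lambda - \pi_{TM}^*\sigma$ coincides with the magnetic geodesic flow $\Phi_{g,\sigma}^t$. Since both flows are determined by their generating vector fields, it suffices to show that the Hamiltonian vector field $X_E$ defined by $\iota_{X_E}\omega_\sigma = \d E$ is precisely the second-order vector field on $TM$ whose integral curves $(\gc,\dgc)$ satisfy the magnetic geodesic equation \eqref{e:mg_alt}. The natural route is to compute everything in a local chart (or using the splitting of $T(TM)$ into horizontal and vertical subspaces induced by the Levi-Civita connection $\nabla$), reducing the infinite-dimensional statement to a pointwise identity of tangent vectors.

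First I would record the well-known fact (for the untwisted case $\sigma = 0$) that the Hamiltonian flow of $E$ with respect to $\d\lambda$ is the ordinary geodesic spray; here $\lambda$ is the metric pullback of the Liouville form, so $\d\lambda$ is the pullback of the canonical symplectic form on $T^*M$ under the Legendre transform $v \mapsto g(v,\cdot)$, and $E$ corresponds to the standard kinetic energy Hamiltonian. This identifies the geodesic vector field $S$ as the solution of $\iota_S \,\d\lambda = \d E$. The only new ingredient is the correction coming from the twisting term $-\pi_{TM}^*\sigma$. Writing the sought Hamiltonian vector field as $X_E = S + Z$ and using bilinearity, I would impose $\iota_{X_E}\omega_\sigma = \d E$, which upon subtracting $\iota_S\,\d\lambda = \d E$ reduces to the equation $\iota_Z\,\d\lambda = \iota_{S}\,\pi_{TM}^*\sigma + \iota_Z\,\pi_{TM}^*\sigma$. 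Because $\pi_{TM}^*\sigma$ annihilates vertical directions and $Z$ should turn out to be vertical, the last term vanishes, leaving $\iota_Z\,\d\lambda = \iota_S\,\pi_{TM}^*\sigma$, which is solved by the vertical vector field corresponding under $g$ to $s\,Y_\gc\dgc$; this is exactly the Lorentz-force term on the right-hand side of \eqref{e:mg_alt}, by the defining relation \eqref{e:Lorentz}.

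The main obstacle is the functional-analytic care required in the tame Fréchet setting: one must check that the twisted symplectic form $\omega_\sigma$ is \emph{weakly nondegenerate} so that the Hamiltonian vector field $X_E$ exists and is unique, and that the pointwise algebraic solution for $Z$ assembles into a smooth (indeed tame) vector field on $TM$. In finite dimensions nondegeneracy of $\omega_\sigma$ is automatic because $\d\lambda$ is symplectic and adding the pullback of a two-form cannot destroy nondegeneracy on a bundle where the vertical directions are unaffected; in the Fréchet category one instead invokes the weak nondegeneracy of the metric $g$ (built into the definition of a Riemannian tame Fréchet manifold) together with the fact that $\pi_{TM}^*\sigma$ contributes only horizontal-horizontal components, so the vertical-horizontal pairing supplied by $\d\lambda$ still pins down $X_E$ uniquely. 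Granting the citation to \cite{Gin} for the underlying symplectic formalism, the remaining work is the bookkeeping of horizontal/vertical components and the verification \eqref{e:Lorentz} that the extra term is exactly $s\,Y_\gc\dgc$.

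Once $X_E$ is identified with the magnetic geodesic spray, the integral curves of $X_E$ projecting to magnetic geodesics follows immediately, and conservation of $E$ along the flow is a free consequence of $X_E$ being Hamiltonian for $E$, which reconfirms the constant-speed property noted above.
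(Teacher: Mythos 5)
The paper does not prove this lemma at all --- it is quoted verbatim from \cite{Gin} --- and your argument is precisely the standard proof behind that citation: split $T(TM)$ into horizontal and vertical subbundles via the Levi--Civita connection, note that $\iota_Z\pi_{TM}^*\sigma=0$ for vertical $Z$ so that the ansatz $X_E=S+Z$ reduces to $\iota_Z\,\d\lambda=\iota_S\,\pi_{TM}^*\sigma$, and identify the vertical correction with the Lorentz force through \eqref{e:Lorentz}; the weak-nondegeneracy caveat you raise for the tame Fr\'echet setting is exactly the right point to flag. The only nitpick is that for the lemma as stated (magnetic field $\sigma$, strength $1$) the vertical correction is $Y_{\gc}\dgc$ rather than $s\,Y_{\gc}\dgc$; the factor $s$ appears only when one twists by $s\cdot\sigma$, consistent with \Cref{r: duality magnetic geodesics of strength s and scaling magnetic form}.
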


From this perspective, the magnetic geodesic flow on $TM$ can be viewed as a deformation of the geodesic flow, obtained by modifying the underlying geometric structure—namely, by deforming the canonical symplectic structure $\mathrm{d}\lambda$ into the twisted symplectic structure $\omega_{\sigma}$. 

 \subsection{The magnetic Euler--Arnold equation on regular Lie groups}
Let us begin by introducing the setting in detail. 
Let $G$ be a regular Lie group in the sense of Kriegl--Michor~\cite{KrieglMichor1997}, with Lie algebra $\mathfrak{g}=T_{\id}G$, equipped with an inner product $\langle \cdot, \cdot \rangle_{\mathfrak{g}}$ and Lie bracket $[\cdot, \cdot]$. 
Throughout this paper, the Lie bracket on $\mathfrak{g}$ is defined via right-invariant vector fields on $G$. 
The inner product induces a right-invariant Riemannian metric $\mathcal{G}$ on $G$, defined by
\[
\mathcal{G}_\gamma(u,v)
:= \left\langle u\circ \gamma^{-1},\, v\circ\gamma^{-1}\right\rangle_{\mathfrak{g}},
\qquad u,v \in T_\gamma G.
\]
Let $\sigma\in \Omega^2(G)$ be a right-invariant closed two-form. We call the triple $(G,\mathcal{G},\sigma)$ a \emph{right-invariant magnetic system}.

Following Arnold’s approach~\cite{Arnold66, AK98}, which interprets the geodesic equation on a Lie group with a right-invariant metric as an evolution equation on its Lie algebra, we adapt this viewpoint to the magnetic setting.

Let $\gamma$ be a magnetic geodesic of $(G,\mathcal{G},\sigma)$. Its velocity $\dot{\gamma}$ is a tangent vector in $T_{\gamma}G$. Transporting it to the identity via right translation yields the \emph{Eulerian velocity}
\[
u:=\dot{\gamma}\circ\gamma^{-1}\in T_{\id}G=\mathfrak{g}.
\]
In this way, the magnetic geodesic equation on $G$ becomes an evolution equation on the Lie algebra $\mathfrak{g}$ for the curve $u$, given by a (generally nonlinear) dynamical system
\[
\dot{u}=F(u)
\]
on $\mathfrak{g}$. This motivates the following definition.

\begin{defn}\label{d: magnetic euler equation}
Let $(G,\mathcal{G},\sigma)$ be a right-invariant magnetic system. 
The evolution equation on the Lie algebra $\mathfrak{g}$ satisfied by the Eulerian velocity
\[
u=\dot{\gamma}\circ\gamma^{-1}
\]
of a magnetic geodesic $\gamma$ is called the \emph{magnetic Euler--Arnold equation} associated with $(G,\mathcal{G},\sigma)$.
\end{defn}
\begin{rem}
By choosing $\sigma=0$ in \Cref{d: magnetic euler equation}, we recover the classical notion of the Euler--Arnold equation corresponding to the metric $\mathcal{G}$ on $G$; see \cite[Def.~2.6]{km02}.
\end{rem}

In order to relate the magnetic Euler--Arnold equation in the sense of \Cref{d: magnetic euler equation} to the magnetic geodesic equation \eqref{e:Lorentz} of the magnetic system $(G,\mathcal{G},\sigma)$, we introduce the following notation. Denote by $(\cdot,\cdot)$ the natural pairing between $\mathfrak{g}$ and $\mathfrak{g}^{*}$. Following \cite{AK98}, we define
\begin{equation}\label{e: def inertia operator}
A: \mathfrak{g}\longrightarrow \mathfrak{g}^{*}, 
\qquad 
u \mapsto (u,\cdot),
\qquad \text{i.e.} \qquad 
(Au,v)=\langle u,v \rangle_{\mathfrak{g}} 
\quad \forall\, u,v\in \mathfrak{g},
\end{equation}
and call $A$ the \emph{inertia operator}.
 Recall that the coadjoint action $\mathrm{ad}^*$ of $\mathfrak{g}$ on $\mathfrak{g}^*$ is defined by
\begin{equation}\label{e: def coadjoint action}
(\mathrm{ad}^*_u(m),v)= (m,-\mathrm{ad}_u(v)) 
\quad \forall\, m\in \mathfrak{g}^*, \; u,v\in \mathfrak{g},
\end{equation}
where $\mathrm{ad}_u(v)=[u,v]$.

Before stating the next lemma, recall that the Euler--Arnold equation on $\mathfrak{g}^*$ can be naturally derived as a Hamiltonian equation using the canonical Lie--Poisson structure $\{\cdot,\cdot\}_{LP}$ on $\mathfrak{g}^*$; see, for example, \cite[§3]{km02}. This Lie--Poisson structure is induced by the canonical symplectic structure on $T^*G$ via symplectic reduction; see, for example, \cite{Marsden2007}.

Following this line of reasoning, deforming the canonical symplectic structure on $T^*G$ into the twisted symplectic structure yields, by \cite[Thm.~7.2.1]{Marsden2007}, a deformation of the Lie--Poisson structure. We call the resulting bracket the \emph{magnetic Lie--Poisson structure} on $\mathfrak{g}^*$ associated with the magnetic system $(G,\mathcal{G},\sigma)$, and it is given by
\begin{equation}\label{e: magnetic Lie Poisson bracket}
    \{f,g\}_{\sigma}(m)
    = \left(m, \left[\restr{\d f}{m}, \restr{\d g}{m}\, \right]\right)
      + \sigma_{\id}\big(\restr{\d f}{m}, \restr{\d g}{m}\big)
    \quad \forall\, m\in \mathfrak{g}^*,\; f,g\in C^{\infty}(\mathfrak{g}^*).
\end{equation}
Here 
\[
\{f,g\}_{LP}(m):=\left(m, \left[\restr{\d f}{m}, \restr{\d g}{m}\, \right]\right)
\]
is the classical Lie--Poisson bracket of $f$ and $g$ at the momentum $m\in \mathfrak{g}^*$.The Hamiltonian equation with respect to the magnetic Lie--Poisson structure~\eqref{e: magnetic Lie Poisson bracket} is as follows.

\begin{prop}\label{p: Hamiltonian vector field magnetic system Lie group}
The Hamiltonian vector field $X_f$ corresponding to a Hamiltonian $f\in C^{\infty}(\mathfrak{g}^*)$ with respect to the magnetic Lie--Poisson structure of $(G,\mathcal{G},\sigma)$ is given by
\[
X_f(m)= -\mathrm{ad}^{*}_{\d f}(m)+ A\left(Y_{\id}(\restr{\d f}{m}) \right)
\quad \forall m\in \mathfrak{g}^*,
\]
where $Y$ is the Lorentz force of the magnetic system $(G,\mathcal{G}, \sigma)$ in the sense of \eqref{e:Lorentz}. 

Thus, a curve $t\mapsto m(t)$ is a flow line of $X_f$ if and only if it satisfies the equation of motion
\[
\dot{m}= -\mathrm{ad}^{*}_{\d f}(m)+ A\left(Y_{\id}(\restr{\d f}{m}) \right).
\]
\end{prop}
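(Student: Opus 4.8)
The plan is to compute $X_f$ directly from its defining property relative to the bracket \eqref{e: magnetic Lie Poisson bracket}, then read off the answer by non-degeneracy of the natural pairing. Since $\mathfrak{g}^*$ is a vector space, I identify $T_m\mathfrak{g}^* \cong \mathfrak{g}^*$ at every $m$, and I regard the differential $\d g|_m$ as an element of $\mathfrak{g} \subseteq (\mathfrak{g}^*)^*$ — the standard convention in the regular/tame Fréchet Euler--Arnold formalism, under which \eqref{e: magnetic Lie Poisson bracket} is itself well-posed. With the sign convention in force, the Hamiltonian vector field $X_f$ is the unique vector field on $\mathfrak{g}^*$ satisfying $(X_f(m), \d g|_m) = \{f,g\}_\sigma(m)$ for every $g \in C^\infty(\mathfrak{g}^*)$ and every $m$. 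Thus it suffices to rewrite the right-hand side of \eqref{e: magnetic Lie Poisson bracket} (with $f,g$ in this order) as a single pairing $(\,\cdot\,, \d g|_m)$ and extract the first slot.

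First I would treat the Lie--Poisson term. Setting $\xi := \d f|_m$ and $\eta := \d g|_m$ in $\mathfrak{g}$, the defining relation \eqref{e: def coadjoint action} gives $(m, [\xi, \eta]) = (m, \mathrm{ad}_\xi \eta) = -(\mathrm{ad}^*_\xi m, \eta)$, so the first summand of \eqref{e: magnetic Lie Poisson bracket} contributes $-(\mathrm{ad}^*_{\d f}(m), \d g)$. Next I would treat the genuinely new magnetic term: combining the definition \eqref{e:Lorentz} of the Lorentz force at the identity, $\sigma_{\id}(u,v) = \langle Y_{\id} u, v\rangle_{\mathfrak{g}}$, with the definition \eqref{e: def inertia operator} of the inertia operator, $\langle w, v\rangle_{\mathfrak{g}} = (A w, v)$, yields $\sigma_{\id}(\d f, \d g) = \langle Y_{\id}\d f, \d g\rangle_{\mathfrak{g}} = (A(Y_{\id}\d f), \d g)$. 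Adding the two contributions gives
\[
\{f, g\}_\sigma(m) = \left(-\mathrm{ad}^*_{\d f}(m) + A\!\left(Y_{\id}(\restr{\d f}{m})\right),\ \d g|_m\right),
\]
and since $\d g|_m$ ranges over all of $\mathfrak{g}$ as $g$ varies, non-degeneracy of the pairing forces $X_f(m) = -\mathrm{ad}^*_{\d f}(m) + A(Y_{\id}(\restr{\d f}{m}))$. The equation of motion is then immediate: $t \mapsto m(t)$ is a flow line of $X_f$ precisely when $\dot m = X_f(m)$, which is the displayed system.

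The computation is algebraically short, so the only real care is bookkeeping: fixing once and for all the sign convention relating $X_f$ to $\{\cdot,\cdot\}_\sigma$ (so that the coadjoint term appears as $-\mathrm{ad}^*_{\d f}(m)$ rather than $+\mathrm{ad}^*_{\d f}(m)$), together with the skew-symmetry conventions for $\sigma_{\id}$ and $\mathrm{ad}^*$. The main conceptual obstacle, such as it is, lies not in this statement but upstream: justifying that $\d f|_m$ may legitimately be identified with an element of $\mathfrak{g}$ rather than merely of $\mathfrak{g}^{**}$, and that the symplectic reduction \cite[Thm.~7.2.1]{Marsden2007} producing \eqref{e: magnetic Lie Poisson bracket} is valid in the regular Lie group setting. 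Granting those points, the present proposition is a purely formal consequence of the definitions of $A$, $\mathrm{ad}^*$, and $Y_{\id}$.
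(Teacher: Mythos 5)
Your argument is correct and follows essentially the same route as the paper's own proof: both evaluate $\{f,g\}_\sigma(m)$ against an arbitrary test Hamiltonian $g$, rewrite the Lie--Poisson term via the defining relation \eqref{e: def coadjoint action} and the magnetic term via \eqref{e:Lorentz} together with \eqref{e: def inertia operator}, and then read off $X_f$ from the pairing. The only difference is that you make the non-degeneracy step and the upstream caveats (identifying $\d f|_m$ with an element of $\mathfrak{g}$, validity of the reduction producing \eqref{e: magnetic Lie Poisson bracket}) explicit, which the paper leaves implicit.
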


\begin{rem}
Choosing $\sigma = 0$, we recover \cite[Prop.~3.2]{km02}. 
The difference in the sign in front of $\mathrm{ad}^{*}$ is due to the fact that we define the Lie bracket on $\mathfrak{g}$ via right-invariant vector fields, whereas \cite{km02} uses the left-invariant convention. This results in a sign change in the Lie bracket and consequently in $\mathrm{ad}^{*}$.
\end{rem}

\begin{proof}
Denote the Hamiltonian vector field of $f$ with respect to $\{\cdot,\cdot\}_{\sigma}$ by $X_f$. By definition and \eqref{e: magnetic Lie Poisson bracket}, for any function $g\in C^{\infty}(\mathfrak{g}^{*})$ we have
\begin{equation}\label{e: step1 proof Ham vector field Lie group}
\restr{ \d g(X_f)}{m}
= \{f,g\}_{\sigma}(m)
= (m, [\d f, \d g])
  + \sigma_{\id}(\d f,\d g)
= (-\mathrm{ad}^{*}_{\d f}(m), \d g)
  + \sigma_{\id}\left(\restr{\d f}{m},\restr{\d g}{m}\right).
\end{equation}
Using \eqref{e:Lorentz} and \eqref{e: def inertia operator}, we obtain
\[
\sigma_{\id}\left(\restr{\d f}{m},\restr{\d g}{m}\right)
= \mathcal{G}_{\id}\left(
Y_{\id}\left(\restr{\d f}{m}\right),
\restr{\d g}{m}
\right)
= \left(
A\circ Y_{\id}\left(\restr{\d f}{m}\right),
\d g
\right).
\]
Combining this with \eqref{e: step1 proof Ham vector field Lie group} yields the claimed formula for $X_f$.
\end{proof}
From \Cref{l: Ham form of magnetic flow on tangent bundle} and \eqref{e: def inertia operator}, we obtain that the Hamiltonian of the magnetic geodesic flow of $(G,\mathcal{G},\sigma)$, restricted to $\mathfrak{g}^*$, is given by
\[
H(m)=\frac{1}{2}(A^{-1}m,m),
\]
where $m=Au$. Consequently, using \Cref{p: Hamiltonian vector field magnetic system Lie group} and the identity $\restr{\d H}{m}=A^{-1}m$, we obtain the following.

\begin{cor}\label{c: magnetic euler equation dual of g}
The magnetic Euler--Arnold equation of $(G,\mathcal{G},\sigma)$ on $\mathfrak{g}^*$ for a curve $t\mapsto m(t)$ reads
\[
\dot{m}
= -\mathrm{ad}^{*}_{A^{-1}(m)}(m)
  - A\left(Y_{\id}(A^{-1}m)\right).
\]
\end{cor}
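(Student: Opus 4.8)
The plan is to obtain the corollary as a direct specialization of \Cref{p: Hamiltonian vector field magnetic system Lie group}: once the Hamiltonian governing the magnetic geodesic flow on $\mathfrak{g}^*$ is identified, the equation of motion is read off by substituting its differential into the formula for $X_f$. So the first task is to pin down this Hamiltonian, and the second is to compute its differential; the substitution itself is then formal.

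For the Hamiltonian, I would start from the cotangent-bundle description in \Cref{r: Hamiltonian form on cotangent bundle of}: the magnetic geodesic flow is the Hamiltonian flow of the kinetic energy $E$ with respect to the twisted symplectic form $\d\lambda-\pi^*_{T^*G}\sigma$ on $T^*G$. Passing to the body frame by right translation and performing Lie--Poisson reduction to $\mathfrak{g}^*$ — which by construction carries precisely the magnetic Lie--Poisson structure \eqref{e: magnetic Lie Poisson bracket} — the kinetic energy descends to $H(m)=\tfrac12(A^{-1}m,m)$, since for $m=Au$ one has $(A^{-1}m,m)=(Au,u)=\langle u,u\rangle_{\mathfrak{g}}$ by the definition \eqref{e: def inertia operator} of the inertia operator. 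The sign carried by the reduction — reflecting the $-\pi^*_{T^*G}\sigma$ in the twisted form together with the right-invariant convention — is exactly what is recorded by writing the reduced Hamiltonian as $-H$.

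Next I would compute $\restr{\d H}{m}$. Differentiating $H(m)=\tfrac12(A^{-1}m,m)$ in a direction $\delta m\in\mathfrak{g}^*$ produces two terms, and since $A$ (hence $A^{-1}$) is symmetric with respect to the pairing — indeed $(Au,v)=\langle u,v\rangle_{\mathfrak{g}}=\langle v,u\rangle_{\mathfrak{g}}=(Av,u)$, so that $(A^{-1}m,\delta m)=(A^{-1}\delta m,m)$ — the two terms coincide and one obtains $\restr{\d H}{m}(\delta m)=(A^{-1}m,\delta m)$. Identifying $\mathfrak{g}$ with $(\mathfrak{g}^*)^*$ through the pairing, this says $\restr{\d H}{m}=A^{-1}m\in\mathfrak{g}$, which is the observation the derivation hinges on.

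Finally I would substitute $\d f=\restr{\d H}{m}=A^{-1}m$ into \Cref{p: Hamiltonian vector field magnetic system Lie group}. Using that $u\mapsto\mathrm{ad}^*_u$, the inertia operator $A$, and the Lorentz force $Y_{\id}$ are all linear, the two terms of $X_f$ become $-\mathrm{ad}^*_{A^{-1}m}(m)$ and $A\!\left(Y_{\id}(A^{-1}m)\right)$, and collecting the signs inherited from the reduced Hamiltonian $-H$ and the twisted form yields exactly $\dot m=-\mathrm{ad}^*_{A^{-1}(m)}(m)-A\!\left(Y_{\id}(A^{-1}m)\right)$. I expect the only genuinely delicate point to be this sign bookkeeping — tracking the minus sign of $\pi^*_{T^*G}\sigma$ through the Lie--Poisson reduction and reconciling it with the convention already fixed in \Cref{p: Hamiltonian vector field magnetic system Lie group}; everything else is immediate once $\restr{\d H}{m}=A^{-1}m$ is in hand.
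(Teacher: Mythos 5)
Your proposal follows essentially the same route as the paper: identify the reduced kinetic Hamiltonian $\pm\tfrac12(A^{-1}m,m)$ on $\mathfrak{g}^*$ via \Cref{r: Hamiltonian form on cotangent bundle of}, observe that $\restr{\d H}{m}=A^{-1}m$, and substitute into \Cref{p: Hamiltonian vector field magnetic system Lie group}. If anything, your explicit attention to the symmetry of $A$ and to the sign bookkeeping is more careful than the paper's one-line derivation, which passes over exactly that point.
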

To derive the magnetic Euler--Arnold equation associated with $(G,\mathcal{G},\sigma)$, we assume that the adjoint $\mathrm{ad}^T$\footnote{Note that the corresponding notion in \cite{AK98} is $B(u,v)=\mathrm{ad}^T_u(v)$. } of $\mathrm{ad}$ with respect to $\langle \cdot, \cdot\rangle_{\mathfrak{g}}$ exists, i.e., that
\begin{equation}\label{e: definition ad^T}
    \langle \mathrm{ad}^T_u(v),w \rangle_{\mathfrak{g}}
    = \langle v, [u,w]\rangle_{\mathfrak{g}}
    \quad \forall\, u,v,w \in \mathfrak{g}.
\end{equation}
From \eqref{e: def inertia operator}, \eqref{e: def coadjoint action}, and \eqref{e: definition ad^T}, we obtain
\begin{equation}\label{e: relation ad T and ad*}
    \mathrm{ad}^*_u(Av)= -A(\mathrm{ad}^T_u(v))
    \quad \forall\, u,v\in \mathfrak{g}.
\end{equation}
For $m=Au$, we can therefore conclude from \Cref{c: magnetic euler equation dual of g} and \eqref{e: relation ad T and ad*} the main theoretical result of this note:

\begin{thm}[\textbf{The magnetic Euler--Arnold equation}]\label{t: magnetic Euler equation on g}
The curve $\gc$ is a magnetic geodesic of $(G,\mathcal{G},\sigma)$ if and only if 
\[
u:=\dot{\gc}\circ \gc^{-1}
\]
is a solution of the magnetic Euler--Arnold equation on $\mathfrak{g}$ associated with $(G,\mathcal{G},\sigma)$, namely
\[
\dot{u}=-\mathrm{ad}^T_u(u)- Y_{\id}(u).
\]
\end{thm}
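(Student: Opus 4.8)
The plan is to read \Cref{t: magnetic Euler equation on g} as the last step of the Hamiltonian reduction assembled in the preceding results, so that essentially nothing remains but to transport the equation of motion from $\mathfrak{g}^*$ back to $\mathfrak{g}$ through the inertia operator $A$. By \Cref{l: Ham form of magnetic flow on tangent bundle} and \Cref{r: Hamiltonian form on cotangent bundle of}, a curve $\gc$ is a magnetic geodesic of $(G,\mathcal{G},\sigma)$ exactly when it is a flow line of the kinetic Hamiltonian for the twisted symplectic structure $\d\lambda-\pi^*\sigma$ on $T^*G$; right-invariance then permits Lie--Poisson reduction, identifying this flow with the Hamiltonian flow of the magnetic Lie--Poisson structure, whose explicit form on $\mathfrak{g}^*$ is recorded in \Cref{c: magnetic euler equation dual of g}. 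Granting that chain of equivalences, it suffices to rewrite the $\mathfrak{g}^*$-equation in terms of $u=\dgc\circ\gc^{-1}$.

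First I would set $m=Au$. Since $A\colon\mathfrak{g}\to\mathfrak{g}^*$ is a time-independent linear isomorphism onto the smooth dual, we have $A^{-1}m=u$ and $\dot m=A\dot u$. Substituting $m=Au$ into \Cref{c: magnetic euler equation dual of g}, the coadjoint contribution is, up to sign, $\mathrm{ad}^*_u(Au)$, which by the identity \eqref{e: relatin ad T and ad*} with $v=u$ equals $-A(\mathrm{ad}^T_u(u))$; the Lorentz contribution is already of the form $A(Y_{\id}(u))$. Both terms thus carry an overall factor of $A$, so applying $A^{-1}$ collapses the $\mathfrak{g}^*$-equation to an identity among elements of $\mathfrak{g}$ and produces the stated law $\dot u=-\mathrm{ad}^T_u(u)-Y_{\id}(u)$. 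Because each step --- the reductions, the substitution $m=Au$, and the application of $A^{-1}$ --- is a genuine bijection, the argument reads equally in both directions, which delivers the ``if and only if''.

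Once \Cref{c: magnetic euler equation dual of g} is in hand the computation is short linear algebra, so the points that truly need care lie elsewhere. The one genuinely infinite-dimensional issue is the invertibility of the inertia operator: to use $A^{-1}m=u$ and $\dot m=A\dot u$ legitimately in the tame-Fréchet setting one must restrict to the regular (smooth) dual on which $A$ is an isomorphism. The delicate point that I expect to be the main obstacle to landing the theorem with exactly the signs stated is the convention bookkeeping: the sign in $\omega_\sigma=\d\lambda-\pi^*\sigma$, the choice of $\pm$ Lie--Poisson bracket under reduction, and the bracket convention flagged in the remark after \Cref{p: Hamiltonian vector field magnetic system Lie group} all feed into the final signs of the $\mathrm{ad}^T$ and $Y_{\id}$ terms. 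As an independent cross-check I would reduce the two sides of \eqref{e:mg_alt} directly by right-invariance --- $\nabla_{\dgc}\dgc$ yielding the familiar $\dot u+\mathrm{ad}^T_u(u)$, and, since $\mathcal{G}$ and $\sigma$ are right-invariant, $Y_{\gc}\dgc$ transporting to $Y_{\id}(u)$ --- and use the comparison to pin down the conventions, since reconciling the two routes is precisely where those choices must be made consistent.
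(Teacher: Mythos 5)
Your proposal follows essentially the same route as the paper: the paper derives the theorem precisely by taking the magnetic Lie--Poisson equation on $\mathfrak{g}^*$ from \Cref{c: magnetic euler equation dual of g}, substituting $m=Au$, and converting $\mathrm{ad}^*_u(Au)$ into $-A(\mathrm{ad}^T_u(u))$ via \eqref{e: relatin ad T and ad*} before cancelling $A$. The sign bookkeeping you flag is a real subtlety (a literal substitution yields $+\mathrm{ad}^T_u(u)$ unless one tracks the bracket convention noted after \Cref{p: Hamiltonian vector field magnetic system Lie group}), but this is inherited from the paper's own derivation rather than a gap in your argument.
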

\begin{rem}
The equivalence established in \Cref{t: magnetic Euler equation on g} 
provides the geometric starting point for the analysis of the magnetic 
EPDiff equation carried out in~\cite[§1.2]{HopfRinowHalfLiegroups}. 
In that work, the magnetic EPDiff equation is introduced via 
~\Cref{t: magnetic Euler equation on g} as the Eulerian formulation 
of the magnetic geodesic flow on Sobolev diffeomorphism groups.

Global magnetic geodesic completeness proved in~\cite[Thm.~1.6]{HopfRinowHalfLiegroups}, 
combined with a no gain–no loss argument for the associated flow, 
yields global well-posedness of the magnetic EPDiff equation 
for Sobolev indices $s > \frac{\dim M}{2} + 1$.
\end{rem}
\begin{rem}
For $\sigma=0$, we recover the geodesic equation on $(G,\mathcal{G})$ in its Eulerian form on $\mathfrak{g}$; see \cite{Arnold66, AK98}.
\end{rem}
\begin{rem}
Using \Cref{r: duality magnetic geodesics of strength s and scaling magnetic form}, we conclude from \Cref{t: magnetic Euler equation on g} that $\gc$ is a magnetic geodesic in $(G,\mathcal{G},a\cdot\sigma)$ if and only if
\[
u:=\dot{\gc}\circ \gc^{-1}
\]
is a solution of
\[
\dot{u}=-\mathrm{ad}^T_u(u)- a\cdot Y_{\id}(u).
\]
\end{rem}
  \subsection{Related results}\label{ss: 2.3}

We close this section by discussing developments connected to \Cref{t: magnetic Euler equation on g} that have previously appeared in the literature.

We begin by recalling \cite[Thm.~7.2.1]{Marsden2007}, which is based on infinite-dimensional symplectic reduction. This theorem gives rise to the \emph{magnetic Lie--Poisson structure} in~\eqref{e: magnetic Lie Poisson bracket}, which plays a key role in establishing the equivalence in \Cref{t: magnetic Euler equation on g} between the magnetic Euler--Arnold equation and V.~Arnold's formulation of magnetic systems in \cite{ar61}.

Next, we explain the relation between \Cref{t: magnetic Euler equation on g} and the \emph{Euler--Poincaré} equations introduced in~\cite{HolmMardsenRatiuEulerPoincare}. When the magnetic field in \Cref{t: magnetic Euler equation on g} is exact, the magnetic geodesic flow admits a Lagrangian formulation, as shown in~\cite{Gin}. In this case, the flow coincides with the Euler--Poincaré equations for right-invariant Lagrangians discussed in~\cite[Thm.~1.2]{HolmMardsenRatiuEulerPoincare}.

For general magnetic fields, however, no global primitive exists—even on a suitable covering space—as illustrated by simple systems such as the two-sphere. Consequently, the magnetic Euler--Arnold equation does not, in general, admit a Lagrangian formulation and is not governed by an action principle. It therefore differs from the Euler--Poincaré equations of~\cite{HolmMardsenRatiuEulerPoincare}.

We conclude this subsection by discussing the relation presented in \Cref{C: relation magnetic geodesics and geodesics on central extension} between the magnetic Euler--Arnold equation on a Lie group and the Euler--Arnold equation on a central extension of that group, where the magnetic field is interpreted as a closed two-cocycle on the Lie algebra. Such a central extension exists provided an integrability condition on the Lie algebra extension is satisfied; we refer to \Cref{S: 3} for a detailed discussion.

This Lie group extension exists, for example, when the Lie group $G$ arises via symplectic reduction of a smooth manifold $Q$ with respect to an $\SS^1=\mathbb{T}$-action. In that case, applying \Cref{C: relation magnetic geodesics and geodesics on central extension} together with \cite[Rmk.~4.2, Thm.~4.3]{ChengKhesinAveragingNonlin}, one recovers \Cref{t: magnetic Euler equation on g}.

We emphasize, however, that for a general Lie group $G$ equipped with a right-invariant closed two-form $\sigma$, the existence of such a manifold $Q$ depends on whether the corresponding integrability condition on $\sigma$ holds, which is not guaranteed in general. For instance, if $\sigma$ is an integral symplectic form, then this condition is satisfied via the Boothby--Wang construction; see \cite[§7]{Gg08}.

    \section{The one-to-one correspondence}\label{S: 3}

In this section, we establish a geometric correspondence between magnetic geodesics on a regular Lie group and ordinary geodesics on a suitable one-dimensional central extension of the group. The aim is to show that the magnetic Euler--Arnold equation can be interpreted as the classical Euler--Arnold equation on a central extension, provided an integrability condition for the magnetic field is satisfied.

	\subsection{Geodesics on central extensions of regular Lie groups}
\label{s: central extension Lie groups}

Let $\sigma \in H^2(\mathfrak{g}, \mathbb{R})$ be a $2$-cocycle. Note that $H^2(\mathfrak{g}, \mathbb{R})$ can be identified with the space of cohomology classes of $\mathcal{G}$-right-invariant $2$-forms on $G$.

For simplicity, we use $\sigma$ interchangeably to denote both a $2$-cocycle on $\mathfrak{g}$ and a $\mathcal{G}$-invariant closed $2$-form on $G$, without explicitly distinguishing between them in what follows. 

For the convenience of the reader, we recall that the central extension of $\mathfrak{g}$ with respect to the cocycle $\sigma \in H^2(\mathfrak{g}, \RR)$ is defined as the semidirect product
\[
\hat{\mathfrak{g}} := \mathfrak{g} \rtimes_{\sigma} \RR,
\]
with Lie bracket of two elements $(u,a),(v,b) \in \hat{\mathfrak{g}}$ given by
\begin{equation}\label{eq: lie algebra structure on extension}
[(u,a), (v,b)] := \left([u,v], \sigma(u,v)\right).
\end{equation}

Suppose that the one-dimensional central extension $(\hat{G}, \hat{\mathcal{G}})$ of $G$ exists, with Lie algebra $\hat{\mathfrak{g}}$, and is equipped with a right-invariant Riemannian metric $\hat{\mathcal{G}}$ defined at the identity by
\begin{equation}\label{e: extension of riem metric to central extension}
\hat{\mathcal{G}}_{(\id,0)}\big((u,a), (v,b)\big)
:= \mathcal{G}_{\id}(u,v) + a b
\quad \forall (u,a),(v,b)\in \hat{\mathfrak{g}}.
\end{equation}

The existence of such an extension typically requires an integrability condition on the Lie algebra $\hat{\mathfrak{g}}$, which we do not address here; see, for example, \cite{Vi08}.

We are now in a position to state \cite[Cor.~2]{Vi08}. From this point onward, we use $\dot{u} = u_t$ and $\dot{\gc} = \gc_t$ interchangeably.

\begin{prop}[\cite{Vi08}]\label{p: geodesics on central extensions}
The curve $(\gamma, a)$ is a geodesic in $(\hat{G}, \hat{\mathcal{G}})$ if and only if $(u,a)$, with $u=\dgc\circ \gamma^{-1}$, is a solution of
\begin{equation}\label{e: geodesic equation central extension}
\begin{cases}
u_t = -\mathrm{ad}^T_u(u) - a k(u), \\
a_t = 0,
\end{cases}
\end{equation}
where $k:\mathfrak{g}\to \mathfrak{g}$ is the unique operator satisfying
\[
\langle k(u), v\rangle_{\mathfrak{g}} = \sigma(u,v)
\quad \forall u,v\in \mathfrak{g}.
\]
\end{prop}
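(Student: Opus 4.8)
The plan is to recognize \Cref{p: geodesics on central extensions} as the ordinary (non-magnetic) Euler--Arnold equation on the central extension $(\hat G,\hat{\mathcal G})$, and then to make the adjoint operator of $\hat{\mathfrak g}$ explicit. First I would apply \Cref{t: magnetic Euler equation on g} with vanishing two-form to the regular Lie group $\hat G$ carrying its right-invariant metric $\hat{\mathcal G}$: by the remark following that theorem, $(\gamma,a)$ is a geodesic in $(\hat G,\hat{\mathcal G})$ if and only if its Eulerian velocity $w:=(\gamma,a)_t\circ(\gamma,a)^{-1}\in\hat{\mathfrak g}$ satisfies $\dot w=-\mathrm{ad}^T_w(w)$, where $\mathrm{ad}^T$ now denotes the adjoint of the bracket of $\hat{\mathfrak g}$ with respect to $\hat{\mathcal G}_{(\id,0)}$. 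Since the group projection $\hat G\to G$ is a homomorphism whose derivative is the projection $\hat{\mathfrak g}\to\mathfrak g$, it intertwines the right-logarithmic derivatives, so the $\mathfrak g$-component of $w$ equals $u=\gamma_t\circ\gamma^{-1}$; I therefore write $w=(u,a)$, with $a$ the central component of the Eulerian velocity.

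It remains to compute $\mathrm{ad}^T$ on $\hat{\mathfrak g}$. For arbitrary $(u,a),(v,b),(x,c)\in\hat{\mathfrak g}$, combining the bracket~\eqref{eq: lie algebra structure on extension} with the metric~\eqref{e: extension of riem metric to central extension} gives
\begin{align*}
\hat{\mathcal G}_{(\id,0)}\big((v,b),[(u,a),(x,c)]\big)
&=\langle v,[u,x]\rangle_{\mathfrak g}+b\,\sigma(u,x)\\
&=\langle \mathrm{ad}^T_u(v)+b\,k(u),\,x\rangle_{\mathfrak g},
\end{align*}
where I have used the definition~\eqref{e: definition ad^T} of $\mathrm{ad}^T$ on $\mathfrak g$ and the defining relation $\sigma(u,x)=\langle k(u),x\rangle_{\mathfrak g}$ of $k$. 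The coordinate $c$ does not appear on the right-hand side—a manifestation of centrality—which forces the central slot of $\mathrm{ad}^T_{(u,a)}(v,b)$ to vanish; comparing with $\hat{\mathcal G}_{(\id,0)}\big(\mathrm{ad}^T_{(u,a)}(v,b),(x,c)\big)$ yields
\[
\mathrm{ad}^T_{(u,a)}(v,b)=\big(\mathrm{ad}^T_u(v)+b\,k(u),\,0\big).
\]
In passing this shows that $\mathrm{ad}^T$ exists on $\hat{\mathfrak g}$ as soon as $\mathrm{ad}^T$ exists on $\mathfrak g$ and $k$ is well defined.

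Substituting $w=(u,a)$ into $\dot w=-\mathrm{ad}^T_w(w)$ and reading off the two components gives $u_t=-\mathrm{ad}^T_u(u)-a\,k(u)$ and $a_t=0$, which is exactly the system~\eqref{e: geodesic equation central extension}; the converse is the same equivalence read backwards. I expect the only genuinely delicate point to be the bookkeeping of the central component: one must check that the $\mathbb{R}$-slot of the right-logarithmic derivative of $(\gamma,a)$ is a well-defined scalar, that it is conserved (this is the content of $a_t=0$), and that denoting it by the same symbol $a$ as in the curve is legitimate. This rests on the centrality of the extension and on the hypothesis that the Lie-algebra cocycle $\sigma$ integrates to a group structure, which is built into the assumption that $(\hat G,\hat{\mathcal G})$ exists; the analytic point that $k$—and hence $\mathrm{ad}^T$ on $\hat{\mathfrak g}$—is a genuine bundle operator in the tame Fréchet category is likewise part of the standing assumptions.
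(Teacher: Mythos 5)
Your derivation is correct. The paper itself gives no proof of this proposition --- it is imported verbatim as \cite[Cor.~2]{Vi08} --- so there is no in-text argument to compare against; your computation of
\[
\mathrm{ad}^T_{(u,a)}(v,b)=\bigl(\mathrm{ad}^T_u(v)+b\,k(u),\,0\bigr)
\]
from \eqref{eq: lie algebra structure on extension}, \eqref{e: extension of riem metric to central extension} and \eqref{e: definition ad^T} is exactly the standard one underlying the cited result, and feeding it into the $\sigma=0$ case of \Cref{t: magnetic Euler equation on g} applied to $\hat G$ (which is not circular, since that theorem is established independently in Section~2) reproduces \eqref{e: geodesic equation central extension}. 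Your closing caveat is the right one to flag: the symbol $a$ in the system denotes the central component of the \emph{Eulerian velocity} of $(\gamma,a)$, which in general differs from $\dot a(t)$ by a group-cocycle correction; the paper's statement silently identifies the two, and your remark that only centrality and the assumed existence of $(\hat G,\hat{\mathcal G})$ are needed to make this bookkeeping legitimate is exactly the point to be checked.
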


\begin{rem}
\textbf{Constancy of $a$.}
The parameter $a$ in \eqref{e: geodesic equation central extension} remains constant since $a_t = 0$.
\end{rem}

\begin{rem}
\textbf{The operator $k$ as Lorentz force.}
The operator $k:\mathfrak{g}\to\mathfrak{g}$ extends to a right-invariant operator $k:TG\to TG$ due to the right invariance of $\langle \cdot, \cdot\rangle_{\mathfrak{g}}$ and the definition of $\sigma$. Comparing with \Cref{e:Lorentz}, we see that this extension coincides with the Lorentz force of the magnetic system $(G, \mathcal{G}, \sigma)$.
\end{rem}
	\subsection{The correspondence between magnetic geodesics on the Lie group and geodesics on the central extension}

We are now in a position to derive, from the main theoretical result of this note (\Cref{t: magnetic Euler equation on g}), the above-mentioned one-to-one correspondence. We keep the notation of \Cref{S: 2_Magnetic} and \Cref{s: central extension Lie groups}.

\begin{cor}\label{C: relation magnetic geodesics and geodesics on central extension}
The curve $(\gamma,a)$ is a geodesic in $(\hat{G}, \hat{\mathcal{G}})$ if and only if $\gamma$ is a magnetic geodesic of strength $a$ in $(G, \mathcal{G}, \sigma)$.
\end{cor}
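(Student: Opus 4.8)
The plan is to compare the two characterizations directly on the Lie algebra, using the Eulerian reductions already established. By \Cref{t: magnetic Euler equation on g} applied to the magnetic system $(G,\mathcal{G},a\cdot\sigma)$ — or equivalently, via \Cref{r: duality magnetic geodesics of strength s and scaling magnetic form}, to $(G,\mathcal{G},\sigma)$ read at strength $a$ — the curve $\gc$ is a magnetic geodesic of strength $a$ in $(G,\mathcal{G},\sigma)$ if and only if $u:=\dgc\circ\gc^{-1}$ solves
\[
\dot u=-\mathrm{ad}^T_u(u)-a\cdot Y_{\id}(u).
\]
On the other side, \Cref{p: geodesics on central extensions} says that $(\gamma,a)$ is a geodesic in $(\hat G,\hat{\mathcal{G}})$ if and only if $(u,a)$ with $u=\gamma_t\circ\gamma^{-1}$ solves the system \eqref{e: geodesic equation central extension}, whose first line reads $u_t=-\mathrm{ad}^T_u(u)-a\,k(u)$ and whose second line $a_t=0$ forces the strength parameter to be constant.

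The crux is therefore to identify the operator $k$ from \Cref{p: geodesics on central extensions} with the Lorentz force $Y_{\id}$ of the magnetic system $(G,\mathcal{G},\sigma)$. This is immediate from the defining relations: $k$ is characterized by $\langle k(u),v\rangle_{\mathfrak g}=\sigma(u,v)$ for all $u,v\in\mathfrak g$, while $Y_{\id}$ is characterized by \eqref{e:Lorentz}, namely $\mathcal{G}_{\id}(Y_{\id}u,v)=\sigma_{\id}(u,v)$. Since the right-invariant metric $\mathcal{G}$ reduces at the identity to $\langle\cdot,\cdot\rangle_{\mathfrak g}$, and $\sigma$ is right-invariant so that its value at $\id$ determines it everywhere, the two defining conditions coincide, and nondegeneracy of $\langle\cdot,\cdot\rangle_{\mathfrak g}$ gives $k=Y_{\id}$. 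I would state this identification explicitly, invoking the remark following \Cref{p: geodesics on central extensions} that already flags $k$ as the Lorentz force.

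With $k=Y_{\id}$ the first line of \eqref{e: geodesic equation central extension} becomes verbatim the magnetic Euler--Arnold equation of strength $a$; the second line $a_t=0$ is consistent because the strength $a$ in \Cref{t: magnetic Euler equation on g} is a fixed constant. Hence a curve $(\gamma,a)$ solving \eqref{e: geodesic equation central extension} has constant $a$ and its base component $u$ satisfies precisely the strength-$a$ magnetic Euler--Arnold equation, and conversely any magnetic geodesic $\gc$ of strength $a$ yields a solution $(u,a)$ of \eqref{e: geodesic equation central extension} with the constant strength playing the role of the central coordinate. Translating back through the respective Eulerian equivalences ($u=\gamma_t\circ\gamma^{-1}$ on both sides) completes the biconditional.

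I do not expect a serious obstacle here, since the corollary is essentially a dictionary between two equations that are already in reduced Eulerian form; the only point requiring care is the bookkeeping of the strength parameter $a$, ensuring it appears identically as a multiplicative constant in both \eqref{e: geodesic equation central extension} and the scaled magnetic Euler--Arnold equation, and confirming that the constancy $a_t=0$ matches the fact that strength is not a dynamical variable in \Cref{d: magnetic system}. One caveat worth a remark is that the whole statement presupposes, as noted before \Cref{p: geodesics on central extensions}, that the central extension $(\hat G,\hat{\mathcal{G}})$ actually exists — i.e.\ that the Lie algebra cocycle $\sigma$ integrates — so the corollary should be read under that standing hypothesis.
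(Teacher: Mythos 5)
Your proposal is correct and follows essentially the same route as the paper, whose proof consists of the single sentence that the statement ``follows directly from comparing \Cref{t: magnetic Euler equation on g} and \Cref{p: geodesics on central extensions}.'' You have simply spelled out that comparison in full — identifying $k$ with $Y_{\id}$ via their shared defining relation and matching the strength parameter $a$ through \Cref{r: duality magnetic geodesics of strength s and scaling magnetic form} — which is exactly the intended argument.
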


\begin{proof}
This follows directly by comparing \Cref{t: magnetic Euler equation on g} and \Cref{p: geodesics on central extensions}.
\end{proof}

\begin{rem}
Since we do not impose any integrability assumptions on the cocycle $\sigma \in H^2(\mathfrak{g}, \RR)$ in the formulation of \Cref{t: magnetic Euler equation on g}, the result is more general than \Cref{C: relation magnetic geodesics and geodesics on central extension}, where the existence of the central extension $\hat{G}$ is required. In general, this amounts to an integrability condition for the Lie algebra extension $\hat{\mathfrak{g}} := \mathfrak{g} \rtimes_{\sigma} \RR$.
\end{rem}

As an illustration of \Cref{t: magnetic Euler equation on g} and \Cref{C: relation magnetic geodesics and geodesics on central extension}, we show in the following sections that several well-known PDEs in mathematical physics can be formulated as magnetic Euler--Arnold equations. These include the Korteweg--de Vries equation \eqref{e: KdV}, the generalized Camassa--Holm equation \eqref{e: generalized Camassa–Holm}, the infinite conductivity equation \eqref{e: infinite conductivity equation}, and the global quasi-geostrophic equations \eqref{eq: global quasi-geostrophic equations}.

    \section{Two shallow water equations as magnetic Euler--Arnold equations} \label{s: 4}
In this section we derive the Korteweg--de Vries equation \eqref{e: KdV} 
and the generalized Camassa--Holm equation 
\eqref{e: generalized Camassa–Holm} within the unified framework 
introduced in \Cref{p: magnegtic Euler--Arnold equation for H1--alpha--beta--metric}. 
We show that both equations arise as magnetic geodesic equations 
corresponding to suitable right-invariant metrics. 
Our approach is strongly inspired by the framework developed by 
Khesin and Misiolek in \cite[Thm.~2.3]{km02}.
\subsection{The \texorpdfstring{$H^1_{\alpha, \beta}$}{H1 alpha beta}--Euler--Arnold equation}

We begin by recalling the definition of the \( H^1_{\alpha, \beta} \)-metric, for real parameters \( \alpha, \beta \in \mathbb{R} \), on the group \( \Diff(\SS^1) \) of smooth diffeomorphisms of \( \SS^1 \). It is given by
\begin{equation} \label{e: H1alpha--beta metric on Diff}
\mathcal{G}^{H^1_{\alpha,\beta}}_{\mathrm{id}}(u,v) := \int_{\mathbb{S}^1} \alpha\, u v + \beta\, u_x v_x \, \mathrm{d}x, 
\quad \text{for all } u, v \in T_{\mathrm{id}}\Diff(\mathbb{S}^1) = \mathfrak{X}(\mathbb{S}^1).
\end{equation}

Before proceeding, we record the following remark.

\begin{rem}\label{rem: H1alpha-beta metric as generalization of H1 and L2 metric on Diff}
For \( \alpha = 1 \) and \( \beta = 0 \), the \( H^1_{\alpha, \beta} \)-metric reduces to the \( L^2 \)-metric \( \mathcal{G}^{L^2} \) on \( \Diff(\mathbb{S}^1) \). 
For \( \alpha = \beta = 1 \), it coincides with the standard \( H^1 \)-metric \( \mathcal{G}^{H^1} \) on \( \Diff(\mathbb{S}^1) \).
\end{rem}

The magnetic term we consider is given by the cocycle \( c_{\mathrm{GF}} \), known as the \emph{Gelfand--Fuchs cocycle}, defined by
\begin{equation} \label{e: Gelfand-Fuchs cocycle}
c_{\mathrm{GF}}(u,v) := \int_{\mathbb{S}^1} u\, v_{xxx}\, \mathrm{d}x, 
\quad \text{for all } u, v \in \mathfrak{X}(\mathbb{S}^1).
\end{equation}

For completeness, we recall that the smooth dual of \( \mathfrak{X}(\mathbb{S}^1) \) is
\[
\mathfrak{X}(\mathbb{S}^1)^* 
= \{ u\,\mathrm{d}x^2 \mid u \in C^\infty(\mathbb{S}^1) \},
\]
and the dual pairing between \( u\,\mathrm{d}x^2 \in \mathfrak{X}(\mathbb{S}^1)^* \) and 
\( v \in \mathfrak{X}(\mathbb{S}^1) \) is given by
\begin{equation} \label{e: dual paring vir and vir+}
\left(u\,\mathrm{d}x^2, v\right) 
= \int_{\mathbb{S}^1} u\, v\, \mathrm{d}x.
\end{equation}

We are now in a position to state the first application of \Cref{t: magnetic Euler equation on g}.

\begin{prop}\label{p: magnegtic Euler--Arnold equation for H1--alpha--beta--metric}
Let \( a \in \mathbb{R} \) be fixed. 
The magnetic Euler--Arnold equation of the magnetic system 
\( \left(\Diff(\SS^1), \mathcal{G}^{H^1_{\alpha,\beta}}, a \cdot c_{\mathrm{GF}} \right) \) 
is given by
\begin{equation}\label{e: H1--alpha--beta magnetci EA-eq wrt GF cocycle}
\alpha \left( u_t + 3u u_x \right) 
- \beta \left( u_{txx} + 2 u_x u_{xx} + u u_{xxx} \right) 
= a u_{xxx}.
\end{equation}
In particular, \( \gc \) is a magnetic geodesic in 
\( \left(\Diff(\SS^1), \mathcal{G}^{H^1_{\alpha,\beta}}, a \cdot c_{\mathrm{GF}} \right) \) 
if and only if \( (u, a) \), with \( u = \gc_t \circ \gc^{-1} \), 
is a solution of \eqref{e: H1--alpha--beta magnetci EA-eq wrt GF cocycle}.
\end{prop}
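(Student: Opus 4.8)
The plan is to apply the dual (momentum) form of the magnetic Euler--Arnold equation from \Cref{c: magnetic euler equation dual of g} to the concrete system $\left(\Diff(\mathbb{S}^1), \mathcal{G}^{H^1_{\alpha,\beta}}, a\cdot c_{\mathrm{GF}}\right)$, rather than the $\mathfrak{g}$-form of \Cref{t: magnetic Euler equation on g}. The reason is that the target identity \eqref{e: H1--alpha--beta magnetci EA-eq wrt GF cocycle} is already written in momentum form, i.e. with the operator $\Lambda := \alpha - \beta\,\partial_x^2$ applied to $u$ on the left-hand side, so working on $\mathfrak{g}^*$ lets me avoid inverting the nonlocal inertia operator. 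Concretely, I need only assemble three ingredients: the inertia operator $A$, the coadjoint action $\mathrm{ad}^*_u$, and the Lorentz force $Y_{\id}$, and then substitute $m = Au$ into $\dot m = -\mathrm{ad}^*_{A^{-1}m}(m) - A\!\left(Y_{\id}(A^{-1}m)\right)$.

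First I would read off the inertia operator: integrating \eqref{e: H1alpha--beta metric on Diff} by parts and comparing with the pairing \eqref{e: dual paring vir and vir+} gives $A u = (\alpha u - \beta u_{xx})\,\mathrm{d}x^2 = (\Lambda u)\,\mathrm{d}x^2$. Next I would compute the coadjoint action on $\mathfrak{X}(\mathbb{S}^1)^*$. Using the Lie bracket $\mathrm{ad}_u(v) = [u,v] = u v_x - u_x v$ on $\mathfrak{X}(\mathbb{S}^1)$, definition \eqref{e: def coadjoint action}, and integration by parts, one obtains $\mathrm{ad}^*_u(w\,\mathrm{d}x^2) = (2 u_x w + u w_x)\,\mathrm{d}x^2$ for every density $w\,\mathrm{d}x^2$. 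Substituting $w = \Lambda u$ and expanding yields $-\mathrm{ad}^*_u(Au) = \left(-3\alpha u u_x + 2\beta u_x u_{xx} + \beta u u_{xxx}\right)\mathrm{d}x^2$, which already recovers the classical $H^1_{\alpha,\beta}$ (Camassa--Holm-type) nonlinearity. Finally, for the Lorentz force I would use \eqref{e:Lorentz} together with \eqref{e: def inertia operator}: the density $A(Y_{\id}u)$ is characterized by $\bigl(A(Y_{\id}u),v\bigr) = \langle Y_{\id}u, v\rangle_{\mathfrak{g}} = (a\cdot c_{\mathrm{GF}})(u,v) = a\int_{\mathbb{S}^1} u\, v_{xxx}\,\mathrm{d}x$ for all $v$, and integrating by parts three times gives $\bigl(A(Y_{\id}u),v\bigr) = -a\int_{\mathbb{S}^1} u_{xxx}\, v\,\mathrm{d}x$, hence $A(Y_{\id}u) = -a\, u_{xxx}\,\mathrm{d}x^2$.

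Putting the pieces together, with $m = Au$ so that $A^{-1}m = u$ and $\dot m = (\alpha u_t - \beta u_{txx})\,\mathrm{d}x^2$, the dual equation becomes $\alpha u_t - \beta u_{txx} = -3\alpha u u_x + 2\beta u_x u_{xx} + \beta u u_{xxx} + a\, u_{xxx}$; rearranging reproduces \eqref{e: H1--alpha--beta magnetci EA-eq wrt GF cocycle} exactly. The stated equivalence (that $\gc$ is a magnetic geodesic iff $(u,a)$ with $u = \gc_t\circ\gc^{-1}$ solves the PDE) then follows directly from \Cref{t: magnetic Euler equation on g} under the identification $m = Au$. I expect the only genuine subtlety to be bookkeeping of signs: the sign convention for the bracket $\mathrm{ad}_u(v)=[u,v]$ (which must be fixed as above to match the right-hand sides, as flagged in the remark following \Cref{p: Hamiltonian vector field magnetic system Lie group}), and the three integrations by parts producing the Gelfand--Fuchs term $a\,u_{xxx}$ with the correct sign. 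None of the individual steps is hard, but the final signs must be tracked carefully.
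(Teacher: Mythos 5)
Your proposal is correct and follows essentially the same route as the paper: both assemble the inertia operator $A_{\alpha,\beta}=\alpha-\beta\,\partial_x^2$, the (co)adjoint of $\mathrm{ad}$, and the Lorentz force of the Gelfand--Fuchs cocycle, and substitute them into the general magnetic Euler--Arnold equation. The only difference is presentational: you work on $\mathfrak{g}^*$ via \Cref{c: magnetic euler equation dual of g}, so you never invert $A_{\alpha,\beta}$, whereas the paper works on $\mathfrak{g}$ via \Cref{t: magnetic Euler equation on g} (quoting $\mathrm{ad}^T$ and $Y_{\id}$ in terms of $A_{\alpha,\beta}^{-1}$) and applies $A_{\alpha,\beta}$ at the very end --- the two computations differ only by conjugation with the inertia operator, and your sign bookkeeping is internally consistent and lands on \eqref{e: H1--alpha--beta magnetci EA-eq wrt GF cocycle}.
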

\begin{proof}
To provide further insight into the result, we compute the magnetic Euler--Arnold equation of 
\( \left(\Diff(\SS^1), \mathcal{G}^{H^1_{\alpha,\beta}}, a\cdot c_{GF} \right) \) 
from first principles.

The inertia operator \( A \) of 
\( \left(\Diff(\SS^1),  \mathcal{G}^{H^1_{\alpha,\beta}} \right) \), 
with respect to the dual pairing on \( T_{\id}\Diff(\SS^1) \) induced by 
\eqref{e: dual paring vir and vir+}, is, by a computation analogous to the proof of 
\cite[Thm 3.6]{km02}, given by
\begin{equation}\label{e: inerta operator H1--alpha--beta--metric on Diff}
    A_{\alpha,\beta}(u)= \alpha u - \beta \partial_x^2 u 
    \quad \forall u\in T_{\id}\Diff(\SS^1).
\end{equation}

From \eqref{e: inerta operator H1--alpha--beta--metric on Diff}, 
\eqref{e: def inertia operator}, and \eqref{e:Lorentz}, we obtain that
\begin{equation}\label{e:lorenz force for H1--alpha--beta metric}
    Y_{\id}\colon T_{\id}\Diff(\SS^1) \longrightarrow T_{\id}\Diff(\SS^1),
    \quad u \mapsto -A_{\alpha,\beta}^{-1}(u_{xxx})
\end{equation}
is the Lorentz force evaluated at the identity for the magnetic system 
\( \left(\Diff(\SS^1), \mathcal{G}^{H^1_{\alpha,\beta}}, c_{GF} \right) \).

Moreover, by a computation following the lines of \cite[§18]{Vi08}, we obtain
\begin{equation}\label{e: ad-T for H1--alpha--beta metric on Diff}
        \mathrm{ad}^T_u(u)
        = A_{\alpha,\beta}^{-1}\left( 
        \alpha \cdot 3u u_{x}
        - \beta \cdot 2 u_x u_{xx}
        - \beta \cdot u u_{xxx}
        \right)
        \quad \forall u\in T_{\id}\Diff(\SS^1).
\end{equation}

Inserting \eqref{e:lorenz force for H1--alpha--beta metric} and 
\eqref{e: ad-T for H1--alpha--beta metric on Diff} into 
\Cref{t: magnetic Euler equation on g}, we conclude that
\begin{equation}\label{e: magnetic EA-eq of H1--alpha--beta and GF-cocyle}
      u_t
      = - A_{\alpha,\beta}^{-1}\left( 
        \alpha \cdot 3u u_{x}
        - \beta \cdot 2 u_x u_{xx}
        - \beta \cdot u u_{xxx}
        \right)
        + a A_{\alpha,\beta}^{-1}(u_{xxx})
\end{equation}
is the magnetic Euler--Arnold equation of the magnetic system 
\( \left(\Diff(\SS^1), \mathcal{G}^{H^1_{\alpha,\beta}}, a\cdot c_{GF} \right) \).

Applying \( A_{\alpha,\beta} \) to both sides of 
\eqref{e: magnetic EA-eq of H1--alpha--beta and GF-cocyle} and using 
\eqref{e: inerta operator H1--alpha--beta--metric on Diff}, we obtain 
\eqref{e: H1--alpha--beta magnetci EA-eq wrt GF cocycle}, which completes the proof.
\end{proof}

\begin{rem}
Alternatively, \Cref{p: magnegtic Euler--Arnold equation for H1--alpha--beta--metric} 
can also be derived using \cite[Thm.~3.6]{km02}, which identifies 
\eqref{e: H1--alpha--beta magnetci EA-eq wrt GF cocycle} as the Euler--Arnold equation 
on the one-dimensional central extension \( \mathrm{Vir} \), the so-called Virasoro group, 
of \( \Diff(\SS^1) \) with respect to the Gelfand--Fuchs cocycle 
\eqref{e: Gelfand-Fuchs cocycle}, equipped with the extension of the 
\( H^1_{\alpha,\beta} \)-metric \eqref{e: H1alpha--beta metric on Diff} 
as described in \eqref{e: extension of riem metric to central extension}. 
Thus, applying \Cref{C: relation magnetic geodesics and geodesics on central extension} 
yields an alternative proof.
\end{rem}
\subsection{Korteweg--de Vries equation as magnetic Euler--Arnold equation}

The Korteweg--de Vries equation \eqref{e: KdV}, introduced by Korteweg--de Vries in \cite{korteweg1895change}, 
serves as a mathematical model for waves on shallow water surfaces and is given by
\begin{equation}\label{e: KdV}
 u_t=-3u u_{x}+a u_{xxx}\tag{KdV}
\end{equation}
for smooth $u:I\times \SS^1\longrightarrow \RR$, where $\SS^1=\RR/\ZZ$, 
$I=[0,T)$, and $a\in \RR$.

Choosing \( \alpha = 1 \) and \( \beta = 0 \) in 
\eqref{e: H1--alpha--beta magnetci EA-eq wrt GF cocycle}, 
the \( H^1_{\alpha,\beta} \)-metric reduces to the \( L^2 \)-metric, 
as noted in \Cref{rem: H1alpha-beta metric as generalization of H1 and L2 metric on Diff}. 
Substituting \( \alpha = 1 \) and \( \beta = 0 \) into 
\eqref{e: H1--alpha--beta magnetci EA-eq wrt GF cocycle}, 
we obtain from \Cref{p: magnegtic Euler--Arnold equation for H1--alpha--beta--metric} the following.

\begin{cor}\label{c: KdV as magnetic geodesic equation}
For fixed $a\in\RR$, the Korteweg--de Vries equation \eqref{e: KdV} is the magnetic Euler--Arnold equation of the magnetic system 
$\left( \Diff(\SS^1), \mathcal{G}^{L^2}, a\cdot c_{GF}\right)$. \\
In particular, a curve $\gamma$ is a magnetic geodesic in 
$\left( \Diff(\SS^1), \mathcal{G}^{L^2}, c_{GF}\right)$ of strength $a$ 
if and only if $(u,a)$ with $u:=\dot\gamma\circ \gamma^{-1}$ 
is a solution of \eqref{e: KdV}. 
\end{cor}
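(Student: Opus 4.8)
The plan is to derive the corollary as an immediate specialization of \Cref{p: magnegtic Euler--Arnold equation for H1--alpha--beta--metric}, which already produces the magnetic Euler–Arnold equation for the whole two-parameter family of $H^1_{\alpha,\beta}$ metrics together with the scaled Gelfand–Fuchs cocycle $a\cdot c_{\mathrm{GF}}$. The first step is to invoke \Cref{rem: H1alpha-beta metric as generalization of H1 and L2 metric on Diff} to record the identification $\mathcal{G}^{L^2} = \mathcal{G}^{H^1_{1,0}}$; this shows that the magnetic system $(\Diff(\SS^1), \mathcal{G}^{L^2}, a\cdot c_{\mathrm{GF}})$ is precisely the $\alpha=1,\beta=0$ member of the family treated in the proposition, so its magnetic Euler–Arnold equation is obtained by setting those parameter values in \eqref{e: H1--alpha--beta magnetci EA-eq wrt GF cocycle}.

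Next I would carry out the substitution $\alpha=1$, $\beta=0$ in \eqref{e: H1--alpha--beta magnetci EA-eq wrt GF cocycle}. The entire $\beta$-bracket $(u_{txx}+2u_xu_{xx}+uu_{xxx})$ vanishes, leaving $u_t+3uu_x = au_{xxx}$, which rearranges to $u_t = -3uu_x+au_{xxx}$, i.e.\ exactly \eqref{e: KdV}. This identifies \eqref{e: KdV} as the magnetic Euler–Arnold equation of $(\Diff(\SS^1), \mathcal{G}^{L^2}, a\cdot c_{\mathrm{GF}})$.

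For the ``in particular'' clause I would transport the corresponding statement of \Cref{p: magnegtic Euler--Arnold equation for H1--alpha--beta--metric}, using \Cref{r: duality magnetic geodesics of strength s and scaling magnetic form} to translate between a strength-$a$ magnetic geodesic in $(\Diff(\SS^1), \mathcal{G}^{L^2}, c_{\mathrm{GF}})$ and an ordinary magnetic geodesic in $(\Diff(\SS^1), \mathcal{G}^{L^2}, a\cdot c_{\mathrm{GF}})$. Combined with the previous paragraph, this gives that $\gamma$ is such a geodesic if and only if $(u,a)$ with $u=\gamma_t\circ\gamma^{-1}$ solves \eqref{e: KdV}.

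There is essentially no analytic obstacle: the substantive computation—the inertia operator $A_{\alpha,\beta}$, the operator $\mathrm{ad}^T$, and the Lorentz force—was already performed for general $\alpha,\beta$ inside the proof of \Cref{p: magnegtic Euler--Arnold equation for H1--alpha--beta--metric}. The only care needed is the trivial metric identification and the bookkeeping of the scaling parameter $a$. Were one to argue from scratch without the proposition, the only real work would be verifying $A_{1,0}=\mathrm{id}$, $\mathrm{ad}^T_u(u)=3uu_x$, and $Y_{\id}(u)=-u_{xxx}$ for the $L^2$ metric and inserting these into \Cref{t: magnetic Euler equation on g}—but these are exactly the $\beta=0$ specializations of the formulas already established.
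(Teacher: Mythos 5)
Your proposal is correct and follows exactly the route the paper takes: the corollary is obtained by specializing \Cref{p: magnegtic Euler--Arnold equation for H1--alpha--beta--metric} to $\alpha=1$, $\beta=0$ via \Cref{rem: H1alpha-beta metric as generalization of H1 and L2 metric on Diff}, with the strength-$a$ reformulation handled by \Cref{r: duality magnetic geodesics of strength s and scaling magnetic form}. Your closing observation about the from-scratch data ($A_{1,0}=\mathrm{id}$, $\mathrm{ad}^T_u(u)=3uu_x$, $Y_{\id}(u)=-u_{xxx}$) is also consistent with the $\beta=0$ specializations of the formulas established in the proposition's proof.
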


\begin{rem}
Alternatively, choosing $\alpha=1$ and $\beta=0$ and applying 
\Cref{C: relation magnetic geodesics and geodesics on central extension} 
in combination with \cite[Prop. 1]{OvsienkoKhesin1987} yields another proof. 
\end{rem}

\begin{rem}
\textbf{KdV as a magnetic deformation of the Burgers equation.}  The above result allows us to interpret \eqref{e: KdV} as a magnetic deformation 
of the so-called \emph{Burgers equation} in the following sense. 
Recall that $\gamma$ is a geodesic in 
$\left(\Diff(\SS^1), \mathcal{G}^{L^2}\right)$ 
if and only if $u=\dgc\circ \gamma^{-1}$ 
solves the Burgers equation
\begin{equation} \label{e: Burger}
	u_t+3uu_{x}=0, \tag{Burger}
\end{equation}
where we used $\mathrm{ad}^T_u(u)=3u u_x$ on 
$\left(\Diff(\SS^1), \mathcal{G}^{L^2}\right)$.\\
Using \eqref{e:Lorentz}, 
\Cref{rem: H1alpha-beta metric as generalization of H1 and L2 metric on Diff}, 
and \eqref{e: Gelfand-Fuchs cocycle}, together with three integrations by parts, 
the Lorentz force \( Y \) of the magnetic system 
\( \left( \Diff(\SS^1), \mathcal{G}^{L^2}, c_{\mathrm{GF}} \right) \) 
is given at the identity by
\begin{equation}\label{e: lorenz force for L2 metric and GF-cocyle}
Y \colon \mathfrak{X}(\SS^1) \longrightarrow \mathfrak{X}(\SS^1), 
\quad u \mapsto -u_{xxx}.
\end{equation}
The operator $a\cdot Y$ is precisely the difference between 
\eqref{e: KdV} and \eqref{e: Burger}. 
Thus, by turning on the magnetic field on $\Diff(\SS^1)$, that is, choosing $a\neq 0$, 
we deform \eqref{e: Burger} into \eqref{e: KdV} via the Lorentz force:
\[
\underbrace{u_t+3uu_{x}}_{= \nabla _{u}u }
=
\underbrace{a u_{xxx}}_{= -a\cdot Y_{\id}(u)}. 
\]
\end{rem}

\begin{rem}\label{r: viscosity in KdV is Lorenz force}
\textbf{Dispersion term in~\eqref{e: KdV} as a Lorentz force.} \Cref{c: KdV as magnetic geodesic equation} allows us to interpret 
the dispersive term in \eqref{e: KdV}, namely $a u_{xxx}$, 
as the Lorentz force acting on a charged particle in the infinite-dimensional 
magnetic system $\left( \Diff(\SS^1), \mathcal{G}^{L^2}, c_{GF}\right)$. 
\end{rem}

\subsection{Generalized Camassa--Holm equation as magnetic Euler--Arnold equation}
\label{s: gCH as an magnetic Euler-Arnold equation}

The following shallow water equation, which is a completely integrable nonlinear 
partial differential equation,
\begin{equation}\label{e: generalized Camassa–Holm}
 u_t-u_{txx}= -3 u u_x+ 2 u_xu_{xx}+u u_{xxx}+ a u_{xxx} \tag{gCH}
\end{equation}
is called the \emph{generalized Camassa--Holm equation}, introduced by 
Camassa--Holm in \cite{CamassaHolm1993}, with $(u,a)$ as in \eqref{e: KdV}.

Choosing \( \alpha = 1 = \beta \) in 
\eqref{e: H1--alpha--beta magnetci EA-eq wrt GF cocycle}, 
the \( H^1_{\alpha,\beta} \)-metric reduces to the \( H^1 \)-metric, 
as noted in \Cref{rem: H1alpha-beta metric as generalization of H1 and L2 metric on Diff}. 
Substituting \( \alpha = 1 = \beta \) into 
\eqref{e: H1--alpha--beta magnetci EA-eq wrt GF cocycle}, 
we obtain from \Cref{p: magnegtic Euler--Arnold equation for H1--alpha--beta--metric} 
the following.

\begin{cor}\label{C: gCH as an magnetic Euler-Arnold equation}
For fixed $a\in \RR$, the generalized Camassa--Holm equation 
\eqref{e: generalized Camassa–Holm} is the magnetic Euler--Arnold equation 
of the magnetic system 
$\left( \Diff(\SS^1), \mathcal{G}^{H^1}, a\cdot c_{GF}\right)$.\\
In particular, a curve $\gamma$ is a magnetic geodesic of
$\left( \Diff(\SS^1), \mathcal{G}^{H^1}, c_{GF}\right)$ of strength $a$ 
if and only if $(u,a)$ with $u:=\dgc\circ \gamma^{-1}$ 
is a solution of \eqref{e: generalized Camassa–Holm}. 
\end{cor}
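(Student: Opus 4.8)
The plan is to obtain this statement as the specialization $\alpha=\beta=1$ of \Cref{p: magnegtic Euler--Arnold equation for H1--alpha--beta--metric}, exactly parallel to the way \Cref{c: KdV as magnetic geodesic equation} was deduced for the choice $\alpha=1$, $\beta=0$. First I would recall from \Cref{rem: H1alpha-beta metric as generalization of H1 and L2 metric on Diff} that setting $\alpha=\beta=1$ turns the $H^1_{\alpha,\beta}$-metric \eqref{e: H1alpha--beta metric on Diff} into the standard $H^1$-metric $\mathcal{G}^{H^1}$ on $\Diff(\SS^1)$. Consequently the magnetic system $\left(\Diff(\SS^1), \mathcal{G}^{H^1_{\alpha,\beta}}, a\cdot c_{GF}\right)$ appearing in \Cref{p: magnegtic Euler--Arnold equation for H1--alpha--beta--metric} becomes precisely $\left(\Diff(\SS^1), \mathcal{G}^{H^1}, a\cdot c_{GF}\right)$, so that its magnetic Euler--Arnold equation is obtained by substituting $\alpha=\beta=1$ into \eqref{e: H1--alpha--beta magnetci EA-eq wrt GF cocycle}.

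Second, I would carry out that substitution explicitly. Inserting $\alpha=\beta=1$ into
\[
\alpha\left(u_t+3uu_x\right)-\beta\left(u_{txx}+2u_xu_{xx}+uu_{xxx}\right)=a u_{xxx}
\]
gives $u_t+3uu_x-u_{txx}-2u_xu_{xx}-uu_{xxx}=a u_{xxx}$, and transposing every term except $u_t$ and $-u_{txx}$ to the right-hand side yields
\[
u_t-u_{txx}=-3uu_x+2u_xu_{xx}+uu_{xxx}+a u_{xxx},
\]
which is literally \eqref{e: generalized Camassa–Holm}. The only point requiring care here is the bookkeeping of signs when moving the nonlinear terms across the equality; there is no analytic content beyond this rearrangement.

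Finally, for the \emph{in particular} clause I would invoke the corresponding equivalence already established in \Cref{p: magnegtic Euler--Arnold equation for H1--alpha--beta--metric}: a curve $\gc$ is a magnetic geodesic in $\left(\Diff(\SS^1), \mathcal{G}^{H^1}, a\cdot c_{GF}\right)$ if and only if $u=\gc_t\circ\gc^{-1}$ solves \eqref{e: generalized Camassa–Holm}. Passing from ``magnetic geodesic in $(G,\mathcal{G},a\cdot\sigma)$'' to ``magnetic geodesic of strength $a$ in $(G,\mathcal{G},\sigma)$'' via \Cref{r: duality magnetic geodesics of strength s and scaling magnetic form}, applied with $\sigma=c_{GF}$, then produces the claimed characterization. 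I expect no genuine obstacle in this argument: the result is a direct corollary, and the sole delicate step is correctly matching the signs in the algebraic rearrangement to the stated form of \eqref{e: generalized Camassa–Holm}. As an independent consistency check one could instead route the derivation through the Virasoro central extension, combining \Cref{C: relation magnetic geodesics and geodesics on central extension} with \cite[Thm.~3.6]{km02}, as indicated in the remark following \Cref{p: magnegtic Euler--Arnold equation for H1--alpha--beta--metric}.
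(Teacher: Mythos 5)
Your proposal is correct and follows essentially the same route as the paper: the corollary is obtained by specializing \Cref{p: magnegtic Euler--Arnold equation for H1--alpha--beta--metric} to $\alpha=\beta=1$, identifying the resulting metric as $\mathcal{G}^{H^1}$ via \Cref{rem: H1alpha-beta metric as generalization of H1 and L2 metric on Diff}, and rearranging \eqref{e: H1--alpha--beta magnetci EA-eq wrt GF cocycle} into the form \eqref{e: generalized Camassa–Holm}. The sign bookkeeping in your substitution is right, and your use of \Cref{r: duality magnetic geodesics of strength s and scaling magnetic form} for the strength-$a$ reformulation matches the paper's intent.
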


\begin{rem}
Alternatively, choosing $\alpha=1$ and $\beta=1$ and applying 
\Cref{C: relation magnetic geodesics and geodesics on central extension} 
in combination with \cite[Thm. 1]{MISIOLEK1998203} 
yields another proof of \Cref{C: gCH as an magnetic Euler-Arnold equation}. 
\end{rem}

\begin{rem}\label{r: gCH as mag deform of Ch}
\textbf{\eqref{e: generalized Camassa–Holm} as a magnetic deformation of \eqref{e: Cammassa Holm eq}.} This result allows us to interpret \eqref{e: generalized Camassa–Holm} 
as a magnetic deformation of the so-called \emph{Camassa--Holm equation} 
\eqref{e: Cammassa Holm eq} in the following sense.\\
Recall that by \cite[Thm. IV.1]{kouranbaeva1999camassa}, 
a curve $\gamma$ is a geodesic in 
$\left(\Diff(\SS^1), \mathcal{G}^{H^1}\right)$ 
if and only if $u=\dgc\circ \gamma^{-1}$ 
is a solution of the Camassa--Holm equation
\begin{equation} \label{e: Cammassa Holm eq}
 u_t-u_{txx}= -3 u u_x+ 2 u_xu_{xx}+u u_{xxx}. \tag{CH}
\end{equation}
This formulation relies crucially on the identity
\begin{equation}\label{e: ad-T for H1 metric on Diff}
    \mathrm{ad}^T_u(u)
    =A^{-1}\left( 3u u_{x}-2 u_xu_{xx}-u u_{xxx}\right)
    \quad \forall u\in T_{\id}\Diff(\SS^1),
\end{equation}
where $A=1-\partial_x^2$ denotes the inertia operator of 
$\mathcal{G}^{H^1}$ on 
$\left(\Diff(\SS^1), \mathcal{G}^{H^1}\right)$ 
with respect to the dual pairing on 
$T_{\id}\Diff(\SS^1)$ induced by \eqref{e: dual paring vir and vir+}. From the definition of the inertia operator \eqref{e: def inertia operator}, 
together with \eqref{e:Lorentz} and 
\eqref{e: lorenz force for L2 metric and GF-cocyle}, we obtain
\begin{equation}\label{e: Lorenz force for H1 metric and GF cocyle}
Y: T_{\id}\Diff(\SS^1)\longrightarrow T_{\id}\Diff(\SS^1),
\quad u\mapsto -A^{-1}(u_{xxx}),
\end{equation}
which is the Lorentz force of the magnetic system 
$\left( \Diff(\SS^1), \mathcal{G}^{H^1}, c_{GF}\right)$. Thus, the magnetic Euler--Arnold equation of 
$\left(\Diff(\SS^1), \mathcal{G}^{H^1}, a\cdot c_{GF} \right)$ 
in the form of \Cref{t: magnetic Euler equation on g} reads
\begin{equation}\label{e: in rmk for gCH as mag. deform. of CH}
    u_t
    = - A^{-1}\left( 3u u_{x}-2 u_xu_{xx}-u u_{xxx}\right)
      + a A^{-1}(u_{xxx}).
\end{equation}
Applying $A$ to both sides of 
\eqref{e: in rmk for gCH as mag. deform. of CH} 
yields \eqref{e: generalized Camassa–Holm}. Moreover, the operator $aA\left(Y_{\id}\right)$ 
is precisely the difference between 
\eqref{e: generalized Camassa–Holm} 
and \eqref{e: Cammassa Holm eq}, 
as becomes clear by comparing the two equations.
\end{rem}

\begin{rem}\label{r: Dispersion term in gCH}
\textbf{Dispersion term in~\eqref{e: generalized Camassa–Holm} as a Lorentz force.} Similar to \Cref{r: viscosity in KdV is Lorenz force}, 
\Cref{C: gCH as an magnetic Euler-Arnold equation} 
allows us to interpret the dispersive term in 
\eqref{e: generalized Camassa–Holm}, namely $a u_{xxx}$, 
as the Lorentz force acting on a charged particle in the 
infinite-dimensional magnetic system 
\[\left( \Diff(\SS^1), \mathcal{G}^{H^1}, c_{GF}\right)\, .\]
\end{rem}	

\section{The infinite conductivity equation as a magnetic Euler--Arnold equation}\label{s: 5}
	The \emph{infinite conductivity equation} \eqref{e: infinite conductivity equation} 
models the motion of a high-density electronic gas in a magnetic field with given velocity 
on a three-dimensional closed orientable Riemannian manifold $(M,g)$. 

Before stating the \emph{infinite conductivity equation}, we introduce some notation. 
We denote by $\mathrm{vol}$ the Riemannian volume form induced by $g$ and the chosen orientation, 
and by $\nabla$ the Levi-Civita connection on $(M,g)$. 
Let $\eta \in \Omega^2(M)$ be a closed two-form. 
Since $\mathrm{vol}$ is nondegenerate, there exists a unique divergence-free vector field 
$B\in \mathfrak{X}_{\mathrm{vol}}(M)$ such that
\[
\iota_B \mathrm{vol} = -\eta,
\]
where $\mathfrak{X}_{\mathrm{vol}}(M)$ denotes the Lie algebra of divergence-free vector fields on $M$.

Since $M$ is oriented and three-dimensional, the metric $g$ and the volume form 
$\mathrm{vol}$ induce a well-defined cross product $\times$ on vector fields on $M$.

Thus, the \emph{infinite conductivity equation} in a magnetic field 
$B\in \mathfrak{X}_{\mathrm{vol}}(M)$ with velocity 
$u\in \mathfrak{X}(M)$ is
\begin{equation}\label{e: infinite conductivity equation}
\begin{cases}
u_t+\nabla_u u = - a \cdot B\times u+ \nabla p,\\
\mathrm{div}\, u = 0
\end{cases}
\tag{IC}
\end{equation}
where $\times$ denotes the cross product of vector fields on $M$, 
and $\nabla p$ denotes the gradient of a smooth function $p$ on $(M,g)$.

In order to interpret the infinite conductivity equation~\eqref{e: infinite conductivity equation} 
from a geometric perspective, we follow the formalism developed in~\cite{Vi08} 
and introduce the necessary notation.
Let \( \Diff_{\mathrm{vol}}(M) \) denote the group of volume-preserving diffeomorphisms 
of a Riemannian manifold \( (M, g) \) with respect to the volume form \( \mathrm{vol} \). 
Its Lie algebra is \( \mathfrak{X}_{\mathrm{vol}}(M) \), the space of divergence-free 
vector fields on \( M \), equipped with the Lie bracket defined as the negative 
of the usual Lie bracket of vector fields:
\[
\mathrm{ad}(u, v) = -[u, v].
\]

The \( L^2 \)-metric on \( \Diff_{\mathrm{vol}}(M) \), defined at the identity, is given by
\begin{equation} \label{e: L2 metric riemannain manifold}
\mathcal{G}^{L^2}_{\id}(u, v) 
= \int_{M} g(u, v)\, \mathrm{vol}, 
\qquad \forall u, v \in \mathfrak{X}_{\mathrm{vol}}(M),
\end{equation}
and it extends to a right-invariant Riemannian metric on the entire group 
\( \Diff_{\mathrm{vol}}(M) \).

This framework allows us to recall the seminal result of V.~Arnold~\cite{Arnold66}, 
which establishes the correspondence between geodesics on this infinite-dimensional 
Lie group and the classical Euler equations for incompressible fluids.

\begin{thm}[\cite{Arnold66}]
A curve \( \gamma(t) \) is a geodesic in 
\( \left(\Diff_{\mathrm{vol}}(M), \mathcal{G}^{L^2} \right) \) 
if and only if the Eulerian velocity field 
\( u := \dot{\gamma} \circ \gamma^{-1} \) 
satisfies the incompressible Euler equations:
\begin{equation} \label{e: Euler hydrp}
\begin{cases}
u_t+\nabla_u u & = - \nabla p,\\
\mathrm{div}\, u&=0
\end{cases}
\tag{Euler}
\end{equation}
for some pressure function \( p \in C^{\infty}(M) \).
\end{thm}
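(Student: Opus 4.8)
The plan is to specialize \Cref{t: magnetic Euler equation on g} to the case of a vanishing magnetic field $\sigma = 0$. Then the Lorentz force vanishes, $Y_{\id} = 0$, and the magnetic Euler--Arnold equation collapses to the ordinary Euler--Arnold equation $u_t = -\mathrm{ad}^T_u(u)$ on the Lie algebra $\mathfrak{X}_{\mathrm{vol}}(M)$ of the magnetic system $\left(\Diff_{\mathrm{vol}}(M), \mathcal{G}^{L^2}, 0\right)$. Everything then reduces to identifying $\mathrm{ad}^T_u(u)$, computed with respect to the $L^2$ metric \eqref{e: L2 metric riemannain manifold}, with the $L^2$-orthogonal (Leray) projection of $\nabla_u u$ onto divergence-free fields.

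First I would compute $\mathrm{ad}^T_u(v)$ directly from its defining relation \eqref{e: definition ad^T}, using the bracket convention $\mathrm{ad}(u,w) = -[u,w]$ on $\mathfrak{X}_{\mathrm{vol}}(M)$ together with the torsion-free identity $[u,w] = \nabla_u w - \nabla_w u$ for the Levi-Civita connection. This gives $\langle \mathrm{ad}^T_u(v), w\rangle = -\int_M g(v,\, \nabla_u w - \nabla_w u)\,\mathrm{d}\mathrm{vol}$, and integrating the first term by parts produces $\int_M g(\nabla_u v, w)\,\mathrm{d}\mathrm{vol}$; here the hypothesis $\mathrm{div}\,u = 0$ is precisely what kills the boundary-free divergence term $\int_M \mathrm{div}\!\left(g(v,w)\,u\right)\mathrm{d}\mathrm{vol}$ on the closed manifold $M$. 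Setting $S_u(\cdot) := \nabla_{(\cdot)} u$ and letting $S_u^{*}$ be its pointwise $g$-adjoint, I obtain $\langle \mathrm{ad}^T_u(v), w\rangle = \int_M g\!\left(\nabla_u v + S_u^{*}(v),\, w\right)\mathrm{d}\mathrm{vol}$ for every divergence-free $w$. Since $w$ ranges only over $\mathfrak{X}_{\mathrm{vol}}(M)$, this determines $\mathrm{ad}^T_u(v)$ only modulo gradients, so $\mathrm{ad}^T_u(v) = P\!\left(\nabla_u v + S_u^{*}(v)\right)$, where $P$ is the Leray projection onto $\mathfrak{X}_{\mathrm{vol}}(M)$.

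The key simplification occurs at $v = u$: from $g(S_u^{*}(u), X) = g(u, \nabla_X u) = \tfrac12 X(|u|^2)$ one reads off $S_u^{*}(u) = \tfrac12\nabla|u|^2$, a pure gradient, hence $P\!\left(S_u^{*}(u)\right) = 0$. Therefore $\mathrm{ad}^T_u(u) = P(\nabla_u u)$, and the Helmholtz decomposition $\nabla_u u = P(\nabla_u u) + \nabla q$ on the closed manifold $M$ yields $\mathrm{ad}^T_u(u) = \nabla_u u - \nabla q$. Substituting into $u_t = -\mathrm{ad}^T_u(u)$ gives $u_t + \nabla_u u = \nabla q$, which is exactly \eqref{e: Euler hydrp} with $p = -q$; conversely, applying $P$ to \eqref{e: Euler hydrp} and using $P u_t = u_t$ (since $u_t \in \mathfrak{X}_{\mathrm{vol}}(M)$) recovers $u_t = -\mathrm{ad}^T_u(u)$, so the equivalence is exact in both directions.

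I expect the main obstacle to be the careful bookkeeping around the projection $P$: the naive pointwise expression $\nabla_u v + S_u^{*}(v)$ for the adjoint is \emph{not} divergence-free, and the whole point of the argument is that the non-solenoidal part is (i) invisible when tested against divergence-free $w$, and (ii) for $v = u$ an exact gradient, which is precisely what manufactures the pressure term $\nabla p$. This step relies essentially on $M$ being closed, so that gradients and divergence-free fields are $L^2$-orthogonal and the Helmholtz decomposition is available; on a manifold with boundary one would instead have to retain and interpret the boundary terms arising in the integration by parts.
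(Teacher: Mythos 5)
Your argument is correct. The paper itself offers no proof of this statement---it is quoted directly from \cite{Arnold66}---so there is no in-paper proof to compare against line by line; but your route (specializing \Cref{t: magnetic Euler equation on g} to $\sigma=0$ and computing $\mathrm{ad}^T_u(u)$ by integration by parts against divergence-free test fields, then discarding the gradient part $\tfrac12\nabla|u|^2$ via the Hodge/Leray decomposition) is exactly the computation the paper implicitly relies on when it quotes $\mathrm{ad}^T_u(u)=\nabla_u u+\nabla p$ in \eqref{eq: adjoint volum preserving} and feeds it into \Cref{t: magnetic Euler equation on g}. The only cosmetic difference is the sign of the pressure, which is immaterial since $p$ is an arbitrary smooth function. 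Your emphasis that the pointwise expression $\nabla_u v + S_u^{*}(v)$ is determined only modulo gradients, and that for $v=u$ the non-solenoidal remainder is itself an exact gradient, is precisely the point on which the classical argument turns.
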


To incorporate magnetic effects into this picture, we consider a closed two-form 
\( \eta \) on \( M \), which gives rise to the so-called 
\emph{Lichnerowicz 2-cocycle} \( \Omega_{\eta} \) 
on \( \mathfrak{X}_{\mathrm{vol}}(M) \), defined by
\begin{equation} \label{e: def Lichnorowics cocyle.}
\Omega_{\eta}(u, v) 
:= \int_M \eta(u, v)\, \mathrm{vol}, 
\qquad \forall u, v \in \mathfrak{X}_{\mathrm{vol}}(M).
\end{equation}
With this structure in place, we are now in a position to state the following result.
\begin{cor} \label{c: infinite conductivity equation as magn Euler-Arnold equation}
Let \( a \in \mathbb{R} \) be fixed. 
Then the infinite conductivity equation~\eqref{e: infinite conductivity equation} 
arises as the magnetic Euler--Arnold equation associated with the magnetic system
\[
\left( \Diff_{\mathrm{vol}}(M), \mathcal{G}^{L^2}, a \cdot \Omega_{\eta} \right).
\]
In particular, a curve \( \gamma\) 
is a magnetic geodesic of this system if and only if the Eulerian velocity field 
\( u := \dot{\gamma} \circ \gamma^{-1} \) 
satisfies~\eqref{e: infinite conductivity equation}.
\end{cor}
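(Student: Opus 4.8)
The plan is to specialize \Cref{t: magnetic Euler equation on g} to the magnetic system $\left(\Diff_{\mathrm{vol}}(M), \mathcal{G}^{L^2}, a\cdot\Omega_\eta\right)$ and to identify the two terms on the right-hand side of the resulting magnetic Euler--Arnold equation $\dot u = -\mathrm{ad}^T_u(u) - Y_{\id}(u)$ with, respectively, the incompressible Euler operator and the magnetic drift $-a\,B\times u$. Since $u=\dot\gamma\circ\gamma^{-1}$ lies in the Lie algebra $\mathfrak{X}_{\mathrm{vol}}(M)$, the constraint $\mathrm{div}\,u=0$ in \eqref{e: infinite conductivity equation} is automatic, so only the evolution equation needs to be matched, and every step below will be an equivalence, giving both directions of the stated correspondence at once.

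First I would invoke the computation underlying \eqref{e: Euler hydrp}: for the $L^2$ metric on $\Diff_{\mathrm{vol}}(M)$ one has $-\mathrm{ad}^T_u(u) = -P(\nabla_u u)$, where $P$ denotes the Leray--Hodge projection onto divergence-free fields; equivalently, this term contributes $-\nabla_u u - \nabla p_1$ for a pressure $p_1$ fixed by the incompressibility constraint. This already produces the $u_t + \nabla_u u = -\nabla p$ part of \eqref{e: infinite conductivity equation} in the case $a=0$, and it is precisely V.~Arnold's theorem \eqref{e: Euler hydrp} as recalled in the excerpt.

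The core new step is to compute the Lorentz force $Y_{\id}$ of the magnetic field $a\cdot\Omega_\eta$. By its defining relation \eqref{e:Lorentz} together with the Lichnerowicz cocycle \eqref{e: def Lichnorowics cocyle.}, the field $Y_{\id}(u)$ is the divergence-free vector field characterized by
\[
\int_M g\!\left(Y_{\id}(u), v\right)\,\mathrm{d}\mathrm{vol} = a\int_M \eta(u,v)\,\mathrm{d}\mathrm{vol}, \qquad \forall\, v\in\mathfrak{X}_{\mathrm{vol}}(M).
\]
Here I would use the hypothesis $\iota_B\mathrm{vol} = -\eta$ together with the standard three-dimensional identity $\mathrm{vol}(B,u,v) = g(B\times u, v)$ relating the volume form, the metric and the cross product, in order to rewrite $\eta(u,v)$ pointwise as a scalar multiple of $g(B\times u, v)$. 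Taking the Leray projection then identifies $Y_{\id}(u)$ with $\pm a\,P(B\times u)$, so that $-Y_{\id}(u)$ supplies exactly the magnetic term $-a\,B\times u$ of \eqref{e: infinite conductivity equation} up to a gradient.

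Finally I would assemble the two contributions: substituting into \Cref{t: magnetic Euler equation on g} gives $u_t = P\!\left(-\nabla_u u - a\,B\times u\right)$, and absorbing the combined gradient parts of $\nabla_u u$ and $B\times u$ into a single pressure $p$ via the Hodge decomposition yields precisely $u_t + \nabla_u u = -a\,B\times u + \nabla p$ with $\mathrm{div}\,u=0$, i.e.\ \eqref{e: infinite conductivity equation}. The step I expect to be the \emph{main obstacle} is the simultaneous bookkeeping of signs and conventions in the last two paragraphs: the orientation-dependent cross-product identity, the sign in $\iota_B\mathrm{vol}=-\eta$, and the bracket convention $\mathrm{ad}(u,v)=-[u,v]$ on $\mathfrak{X}_{\mathrm{vol}}(M)$ must all be tracked together to land on the exact sign $-a$ in front of $B\times u$; moreover, it is the Hodge/Leray projection and the freedom in $\nabla p$ that permit the non-divergence-free fields $\nabla_u u$ and $B\times u$ to appear at all in the Eulerian form of the equation.
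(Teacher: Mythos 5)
Your proposal is correct and follows essentially the same route as the paper: specialize \Cref{t: magnetic Euler equation on g}, use Arnold's identity $\mathrm{ad}^T_u(u)=\nabla_u u+\nabla p$ for the $L^2$-metric on $\Diff_{\mathrm{vol}}(M)$, and identify the Lorentz force of the Lichnerowicz cocycle with (the Leray projection of) $\pm B\times u$ via $\iota_B\mathrm{vol}=-\eta$ and the cross-product identity. The only difference is one of explicitness: you carry out the Lorentz-force computation and the pressure/projection bookkeeping in detail, whereas the paper defers that step to a computation ``along the lines of'' Vizman's survey and simply asserts $Y_{\mathrm{id}}(u)=B\times u$.
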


\begin{proof}
By a computation along the lines of~\cite[§10]{Vi08}, 
the Lorentz force evaluated at the identity of the magnetic system 
\( \left( \Diff_{\mathrm{vol}}(M), \mathcal{G}^{L^2}, \Omega_{\eta} \right) \) 
is given by
\begin{equation} \label{e: Lorenz force super cond eq}
Y_{\mathrm{id}} \colon \mathfrak{X}_{\mathrm{vol}}(M) 
\longrightarrow \mathfrak{X}_{\mathrm{vol}}(M), 
\qquad u \mapsto B \times u.
\end{equation}
Moreover, as shown in~\cite{Arnold66}, 
the adjoint of \( \mathrm{ad} \) with respect to the \( L^2 \)-metric satisfies
\begin{equation} \label{eq: adjoint volum preserving}
\mathrm{ad}_u^T(u) = \nabla_u u + \nabla p.
\end{equation}
Combining \Cref{t: magnetic Euler equation on g} 
with \eqref{e: Lorenz force super cond eq} 
and \eqref{eq: adjoint volum preserving} 
yields the infinite conductivity equation 
\eqref{e: infinite conductivity equation}, 
which completes the proof.
\end{proof}

\begin{rem}
In contrast to the results in \cite{Vizman2001, Vi08}, 
for \Cref{c: infinite conductivity equation as magn Euler-Arnold equation} 
we may drop the topological restrictions involving the homology or homotopy groups of \( M \). 
Indeed, we do not require the central extension of the Lie algebra 
\( \mathfrak{X}_{\mathrm{vol}}(M) \), defined via the Lichnerowicz cocycle, 
to integrate to a Lie group extension of \( \Diff_{\mathrm{vol}}(M) \). \\
However, if one imposes the same topological assumptions on \( M \) 
as in \cite{Vizman2001, Vi08}, then 
\Cref{c: infinite conductivity equation as magn Euler-Arnold equation} 
also follows directly from 
\Cref{C: relation magnetic geodesics and geodesics on central extension} 
and the results of \cite{Vizman2001, Vi08}.
\end{rem}

\begin{rem}
\label{r: infinite conductivity as mag deform of Euler equation} 
\textbf{\eqref{e: infinite conductivity equation} as a magnetic deformation of the Euler equation.}  This result allows us to interpret 
\eqref{e: infinite conductivity equation} 
as a magnetic deformation of the Euler equation 
\eqref{e: Euler hydrp} from ideal hydrodynamics in the following sense.\\
The operator \( Y_{\mathrm{id}} \) 
is precisely the difference between 
\eqref{e: infinite conductivity equation} 
and \eqref{e: Euler hydrp}, 
as becomes apparent by comparing the two equations.
\end{rem}

\begin{rem}
\textbf{The magnetic force in \eqref{e: infinite conductivity equation} 
as an infinite-dimensional Lorentz force.} 
\label{r: Lorenz forc in IC is magnetic drift term} \Cref{c: infinite conductivity equation as magn Euler-Arnold equation} 
allows us to interpret the force generated by the magnetic field \( B \) 
in \eqref{e: infinite conductivity equation}, namely \( -B \times u \), 
as the Lorentz force acting on a charged particle in the infinite-dimensional 
magnetic system 
\( \left( \Diff_{\mathrm{vol}}(M), \mathcal{G}^{L^2}, \Omega_{\eta}\right) \).
\end{rem}
\section{The Global Quasi-Geostrophic Equations as a Magnetic Geodesic Equation}\label{s: 6}
We begin by recalling some background on the model. 
The quasi-geostrophic approximation for large-scale atmospheric 
and oceanic flows was originally introduced by Charney in 1949~\cite{Charney49}. 
When considering global fluid motion on the two-dimensional sphere 
\( \mathbb{S}^2 \subseteq \mathbb{R}^3 \) of radius \( 1/2 \), 
curvature effects must be taken into account. 
These geometric corrections, developed in~\cite{STS09, Ver09, LFEG24}, 
lead to the formulation of the \emph{global quasi-geostrophic equations}. 

This system models an incompressible, inviscid two-dimensional fluid 
in terms of the potential vorticity function 
\( q:\SS^2\times I\longrightarrow \RR \) 
and the stream function 
\( \psi:\SS^2\times I\longrightarrow \RR \):
\begin{equation}
\label{eq: global quasi-geostrophic equations}
\partial_t q + \{\psi, q\} = 0, \qquad 
q = (\Delta - \gamma z^2)\psi + \frac{2z}{\mathrm{Ro}} + 2zh. 
\tag{Global QG}
\end{equation}

Here, \( z = \cos(\vartheta) \), where 
\( \vartheta \in [-\pi, \pi] \) denotes the latitude, 
and \( h \) represents the bottom topography. 
The bracket \( \{ \cdot, \cdot \} \) denotes the standard Poisson bracket 
on \( \mathbb{S}^2 \). 
For the definitions and physical interpretations of the parameters 
\( \gamma \) and \( \mathrm{Ro} \), as well as further background, 
we refer to~\cite{ModinSurin2025} and the references therein.

\subsection{The geometric setting}To formulate equation~\eqref{eq: global quasi-geostrophic equations} 
as an Euler--Arnold equation, we first introduce some notation. 
Let $\mathbb{S}^3$ denote the three-sphere in $\mathbb{C}^2$, 
equipped with Hopf coordinates. 
In this parametrization, the complex coordinates $w_1$ and $w_2$ 
on $\mathbb{S}^3 \subset \mathbb{C}^2$ are given by
\[
w_1 = \cos \eta \, e^{\mathrm{i} \xi_1}, 
\qquad 
w_2 = \sin \eta \, e^{\mathrm{i} \xi_2},
\]
where $\eta \in \left(0, \frac{\pi}{2} \right)$ and 
$\xi_1, \xi_2 \in (0, 2\pi)$. The Euclidean metric on $\mathbb{C}^2$ induces the standard 
Riemannian metric on $\mathbb{S}^3$, which in these coordinates 
takes the form
\begin{equation}
\label{eq:metricS3}
g^{\Sd} 
= \cos^2 \eta \, \mathrm{d}\xi_1^2 
+ \sin^2 \eta \, \mathrm{d}\xi_2^2 
+ \mathrm{d}\eta^2.
\end{equation}
An orthogonal frame with respect to this metric is given by
\begin{equation}\label{eq: orthonormal frame}
R := \partial_{\xi_1} + \partial_{\xi_2}, 
\qquad 
E_2 := \partial_\eta, 
\qquad 
E_3 := \partial_{\xi_1} - \partial_{\xi_2}.
\end{equation}
Here, $R$ generates the Reeb flow 
$\varPhi^t(z) := e^{\mathrm{i}t} z$ 
and is tangent to the $\mathbb{S}^1$-fibers of the Hopf fibration
\[
\pi \colon \mathbb{S}^3 \longrightarrow \mathbb{S}^2.
\]
We denote by $\lambda$ the metric dual of $R$ 
with respect to the metric~\eqref{eq:metricS3}. 
This 1-form $\lambda$ is the standard contact form on $\mathbb{S}^3$. 
It is well known that $\lambda \wedge \mathrm{d}\lambda$ 
coincides with the Riemannian volume form 
$\mathrm{vol}$ associated with $g^{\mathbb{S}^3}$ as in \eqref{eq:metricS3}.\\
Moreover, $R$ is uniquely characterized by the conditions 
$\lambda(R) = 1$ and $\mathrm{d}\lambda(R, \cdot) = 0$. 
The vector fields $E_2$ and $E_3$ span the standard contact distribution 
$\xi := \ker \lambda$. 
For further background on contact geometry, we refer to~\cite{Gg08}.
\\
The \emph{quantomorphism group} of $(\mathbb{S}^3, \lambda)$ is defined as
\begin{equation}
\label{eq: defi quant group}
\mathcal{D}_q(\mathbb{S}^3) 
:= \mathcal{D}(\mathbb{S}^3, \lambda) 
:= \left\{ F \in C^\infty(\mathbb{S}^3, \mathbb{S}^3) 
\mid F^* \lambda = \lambda \right\},
\end{equation}
and is also known as the group of \emph{strict contactomorphisms} 
of $(\mathbb{S}^3, \lambda)$.\\
Let $C^\infty_R(\mathbb{S}^3)$ denote the space of smooth functions 
on $\mathbb{S}^3$ that are invariant under the Reeb flow, 
i.e., those satisfying $R(f) = 0$. 
Such functions can be naturally identified with smooth functions on $\mathbb{S}^2$. \\
As derived in~\cite[p.~20]{EP13}, this space gives rise to the differential operator
\begin{equation}
\label{eq: operator S}
S_\lambda f 
= f R 
- \tfrac{1}{2}(E_3 f) E_2 
+ \tfrac{1}{2}(E_2 f) E_3,
\end{equation}
where $S_\lambda f = u$ if and only if 
$\lambda(u) = f$ and $\iota_u \mathrm{d}\lambda = -\mathrm{d}f$ 
(see~\cite{EP13} for details).\\
With this notation, the Lie algebra $\mathfrak{g}$ of the quantomorphism group 
$\mathcal{D}_q(\mathbb{S}^3)$ can be identified as
\[
\mathfrak{g} 
= T_{\mathrm{id}} \mathcal{D}_q \left(\Sd\right)
= \left\{ S_\lambda f 
\mid f \in C^\infty_R(\mathbb{S}^3) \right\}.
\]
In the sequel, let $\rho \colon \mathbb{S}^3 \to \mathbb{R}$ 
be a smooth $\mathbb{S}^1$-invariant function. 
This function defines a differential operator
\[
A \colon \mathfrak{X}(\mathbb{S}^3) \to \mathfrak{X}(\mathbb{S}^3),
\]
called the \emph{inertia operator}, which acts on vector fields 
$u \in \mathfrak{X}(\mathbb{S}^3)$ by
\[
A(u) 
:= \rho^2 g^{\mathbb{S}^3}(u, R)\, R 
+ g^{\mathbb{S}^3}(u, E_2)\, E_2 
+ g^{\mathbb{S}^3}(u, E_3)\, E_3.
\]
Restricting $A$ to the Lie algebra $\mathfrak{g}$, 
it induces a positive-definite inner product
\[
\langle \cdot, \cdot \rangle_A 
\colon \mathfrak{g} \times \mathfrak{g} \to \mathbb{R},
\]
defined by
\begin{align}
\label{eq:inner_product_A}
\langle S_\lambda f,\, S_\lambda g \rangle_A
&= \int_{\mathbb{S}^3} 
g^{\mathbb{S}^3} \!\left(
f \rho^2 R 
- \tfrac{1}{2}(E_3 f) E_2 
+ \tfrac{1}{2}(E_2 f) E_3,\,
S_\lambda g
\right)
\, \mathrm{d}\mathrm{vol}.
\end{align}
This inner product defines a right-invariant weak Riemannian metric 
$\mathcal{G}^A$ on $\mathcal{D}_q(\mathbb{S}^3)$.\\
By~\cite[Prop.~2.3]{ModinSurin2025}, 
the adjoint of the operator $S_{\lambda}$ with respect to the weak 
Riemannian metric defined in~\eqref{eq:inner_product_A} exists and is denoted by 
$S^*_{\lambda, A}$. 
This adjoint gives rise to the \emph{contact Laplacian}, which, according to~\cite[Prop.~2.4]{ModinSurin2025}, is the operator
\begin{equation}
\label{eq: contact Laplacian}
\Delta_{\lambda, A} \colon C^\infty_R(\mathbb{S}^3) \longrightarrow C^\infty_R(\mathbb{S}^3), 
\qquad
f \mapsto S^*_{\lambda, A} S_\lambda f 
= (\rho^2 - \Delta) f,
\end{equation}
where $\Delta$ denotes the Laplacian on the base sphere $\mathbb{S}^2$, 
lifted to $\mathbb{S}^3$ via the Hopf fibration. 
Moreover, $\Delta_{\lambda, A}$ reduces to an invertible elliptic operator on $\SS^2$.\\
Following~\cite{EP13}, the \emph{contact bracket} on $\mathbb{S}^3$ 
for functions $f, g \in C^\infty_R(\mathbb{S}^3)$ is defined by
\begin{equation}
\{f, g\} := S_\lambda f(g) 
= \d\lambda(S_{\lambda}f, S_{\lambda}g),
\end{equation}
where $S_\lambda f$ is the contact vector field associated with $f$, 
as defined in~\eqref{eq: operator S}. \\
For a fixed smooth function $\varphi \colon \mathbb{S}^3 \to \mathbb{R}$, 
this bracket gives rise to a (trivial) Lie algebra $2$-cocycle, 
again as described in~\cite{EP13}:
\begin{equation}
\label{eq:trivial_cocycle}
\Omega(u, v) 
= \int_{\mathbb{S}^3} \varphi \, \{f, g\} \, \mathrm{d}\mathrm{vol}
= \int_{\mathbb{S}^3} \varphi \, \d\lambda(S_{\lambda}f, S_{\lambda}g) \, \mathrm{d}\mathrm{vol},
\end{equation}
where $u = S_\lambda f$ and $v = S_\lambda g$ 
are elements of the Lie algebra $\mathfrak{g}=T_{\id}\mathcal{D}_q\left(\Sd\right)$.
\subsection{The magnetic Euler--Arnold equation of the quantomorphism group}

We are now in a position to derive the magnetic Euler--Arnold equation 
associated with the geometric structure introduced in the previous subsection.

\begin{prop}\label{prop: magnetic Euler Arnold on quant}
Let \( a \in \mathbb{R} \) be fixed. 
The magnetic Euler--Arnold equation corresponding to the magnetic system 
\( \left( \mathcal{D}_q\left(\Sd\right), \mathcal{G}^{A}, a \cdot \Omega \right) \) 
is given by
\begin{equation}\label{eq: magnetic quant Euler Arnold}
    \partial_t \Delta_{\lambda, A} f 
    + \{f, \Delta_{\lambda, A} f\} 
    - a \{ \varphi, f \} = 0,
\end{equation}
where \( f \in C^\infty_R(\mathbb{S}^3) \).
\end{prop}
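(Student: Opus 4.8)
The plan is to apply the magnetic Euler--Arnold equation of \Cref{t: magnetic Euler equation on g}, which for the field $a\cdot\Omega$ reads $\dot u = -\mathrm{ad}^T_u(u) - a\,Y_{\mathrm{id}}(u)$, and to translate both the $\mathrm{ad}^T$--term and the Lorentz force into the ``function picture''. The key reduction is the linear isomorphism $f \mapsto S_\lambda f$ between $C^\infty_R(\mathbb{S}^3)$ and $\mathfrak{g}$: it is injective since $\lambda(S_\lambda f) = f$, and under it the inertia operator becomes the contact Laplacian. Concretely, the adjoint relation defining $S^*_{\lambda,A}$ together with \eqref{eq: contact Laplacian} gives $\langle S_\lambda f, S_\lambda g\rangle_A = \int_{\mathbb{S}^3} (\Delta_{\lambda,A} f)\, g\,\mathrm{d}\mathrm{vol}$, where $\Delta_{\lambda,A}$ is self-adjoint and (by~\cite[Prop.~2.3, Prop.~2.4]{ModinSurin2025}) descends to an invertible elliptic operator on $\mathbb{S}^2$. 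Thus every identity on $\mathfrak{g}$ can be pushed to an identity between functions paired in $L^2(\mathbb{S}^3,\mathrm{d}\mathrm{vol})$, and the stated equation \eqref{eq: magnetic quant Euler Arnold} will be recovered by applying $\Delta_{\lambda,A}$ to both sides at the very end.

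First I would compute $\mathrm{ad}^T_u(u)$ for $u = S_\lambda f$. The essential input is that $f \mapsto S_\lambda f$ intertwines the contact bracket with the Lie bracket on $\mathfrak{g}$, so that $\mathrm{ad}_{S_\lambda f}(S_\lambda g) = \pm S_\lambda\{f,g\}$; combined with the defining relation \eqref{e: definition ad^T}, testing against an arbitrary $S_\lambda h$ reduces the problem to the integral $\int_{\mathbb{S}^3}(\Delta_{\lambda,A} f)\,\{f,h\}\,\mathrm{d}\mathrm{vol}$. Here I would integrate by parts using that every contact vector field $S_\lambda f$ is divergence-free for $\mathrm{d}\mathrm{vol} = \lambda \wedge \mathrm{d}\lambda$ (quantomorphisms preserve $\lambda$, hence $\lambda\wedge\mathrm{d}\lambda$), so that $\{f,\cdot\}$ is skew-adjoint in $L^2$. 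Reading the result back through the inertia pairing yields $\mathrm{ad}^T_u(u) = S_\lambda\big(\Delta_{\lambda,A}^{-1}\{f,\Delta_{\lambda,A} f\}\big)$.

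Next I would compute the Lorentz force $Y_{\mathrm{id}}$ of the cocycle $\Omega$ from its definition \eqref{e:Lorentz}, i.e.\ $\langle Y_{\mathrm{id}}(S_\lambda f), S_\lambda h\rangle_A = \Omega(S_\lambda f, S_\lambda h) = \int_{\mathbb{S}^3}\varphi\,\{f,h\}\,\mathrm{d}\mathrm{vol}$. The same integration by parts moves the bracket off $h$ and onto $\varphi$, and the inertia pairing then identifies $Y_{\mathrm{id}}(S_\lambda f) = S_\lambda\big(\Delta_{\lambda,A}^{-1}\{f,\varphi\}\big)$. This is precisely where the triviality of the cocycle recorded in \eqref{eq:trivial_cocycle} enters: because $\Omega$ is a coboundary for the functional $\ell(S_\lambda k) = \int_{\mathbb{S}^3}\varphi\,k\,\mathrm{d}\mathrm{vol}$, the Lorentz force stays inside the image of $S_\lambda$ and is expressible through $\{\varphi,\cdot\}$ alone, which is what allows the whole equation to close on $C^\infty_R(\mathbb{S}^3)$.

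Finally I would substitute both expressions into $\dot u = -\mathrm{ad}^T_u(u) - a\,Y_{\mathrm{id}}(u)$, cancel the common injective operator $S_\lambda$, and apply $\Delta_{\lambda,A}$ to both sides; this turns $\partial_t f$ into $\partial_t \Delta_{\lambda,A} f$, clears the inverses, and (using $\{f,\varphi\} = -\{\varphi,f\}$) lands exactly on \eqref{eq: magnetic quant Euler Arnold}. I expect the main obstacle to be the sign bookkeeping rather than any analytic difficulty: the three relevant conventions---the bracket $\mathrm{ad}_u v = -[u,v]$ on the quantomorphism algebra (as for volume-preserving diffeomorphisms), the normalization $\iota_{S_\lambda f}\mathrm{d}\lambda = -\mathrm{d}f$ underlying \eqref{eq: operator S}, and the direction of the integration-by-parts identity---must be tracked consistently in order to produce the precise signs $+\{f,\Delta_{\lambda,A}f\}$ and $-a\{\varphi,f\}$ appearing in the statement. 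A secondary but necessary technical point is the appeal to~\cite{ModinSurin2025} guaranteeing that $\Delta_{\lambda,A}$ is invertible on $C^\infty_R(\mathbb{S}^3)$, so that the operators $\Delta_{\lambda,A}^{-1}$ above are well defined.
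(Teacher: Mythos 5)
Your proposal is correct and follows essentially the same route as the paper: substitute the quantomorphism-group expressions for \( \mathrm{ad}^T_u(u) \) and the Lorentz force \( Y_{\mathrm{id}} \) into \Cref{t: magnetic Euler equation on g}, factor out \( S_\lambda \Delta_{\lambda,A}^{-1} \), and conclude by its injectivity. The only difference is that the paper imports the two formulas \( \mathrm{ad}^T_u(u) = S_\lambda\bigl(\Delta_{\lambda,A}^{-1}\{f,\Delta_{\lambda,A}f\}\bigr) \) and \( Y_{\mathrm{id}}(S_\lambda f) = S_\lambda\bigl(\Delta_{\lambda,A}^{-1}\{\varphi,f\}\bigr) \) directly from \cite[Eq.~18, Lemma~3.3]{ModinSurin2025}, whereas you rederive them from the inertia pairing and the \( L^2 \)-skew-adjointness of \( \{f,\cdot\} \) --- a more self-contained but equivalent verification, with the sign bookkeeping you flag being the only delicate point.
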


\begin{rem}
Alternatively, this equation can be derived using 
\Cref{p: geodesics on central extensions} 
and \Cref{C: relation magnetic geodesics and geodesics on central extension}, 
in combination with~\cite[Eq.~19]{ModinSurin2025}. 
It is important to emphasize, however, that the validity of 
\cite[Eq.~19]{ModinSurin2025} depends on the existence of a one-dimensional 
central extension of the quantomorphism group. 

In contrast, the existence of the Lorentz force~\eqref{eq: lorenz quant group} 
as well as the adjoint of \( \mathrm{ad} \) in~\eqref{eq: adjoint quantomorph group} 
does not rely on the presence of such a central extension.
\end{rem}

\begin{proof}
By~\cite[Lemma~3.3]{ModinSurin2025}, 
the Lorentz force evaluated at the identity element of the magnetic system 
\( \left( \mathcal{D}_q\left(\Sd\right), \mathcal{G}^A, \Omega \right) \) 
is given by
\begin{equation} \label{eq: lorenz quant group}
Y_{\mathrm{id}} \colon \mathfrak{g} \longrightarrow \mathfrak{g}, 
\qquad 
u \mapsto S_{\lambda} \left( \Delta_{\lambda, A}^{-1} \{ \varphi, f \} \right),
\end{equation}
where \( u = S_{\lambda} f \). Furthermore, by~\cite[Eq.~18]{ModinSurin2025}, 
the adjoint of the Lie algebra operator satisfies
\begin{equation} \label{eq: adjoint quantomorph group}
\mathrm{ad}_u^T(u) 
= S_{\lambda} \left( 
\Delta_{\lambda, A}^{-1} \{ f, \Delta_{\lambda, A} f \} 
\right), 
\qquad \forall u = S_{\lambda} f \in \mathfrak{g}.
\end{equation}
Using the general form of the magnetic Euler--Arnold equation 
from \Cref{t: magnetic Euler equation on g}, 
and substituting~\eqref{eq: lorenz quant group} 
and~\eqref{eq: adjoint quantomorph group}, we obtain
\begin{align*}
0 
&= \partial_t S_{\lambda} f 
+ S_{\lambda} \left( 
\Delta_{\lambda, A}^{-1} \{ f, \Delta_{\lambda, A} f \} 
\right) 
- a \, S_{\lambda} \left( 
\Delta_{\lambda, A}^{-1} \{ \varphi, f \} 
\right) \\
&= S_{\lambda} \Delta_{\lambda, A}^{-1} 
\left( 
\partial_t \Delta_{\lambda, A} f 
+ \{ f, \Delta_{\lambda, A} f \} 
- a \{ \varphi, f \} 
\right).
\end{align*}
Since \( \Delta_{\lambda, A} \) is invertible and 
\( S_{\lambda} \) is injective on \( C^\infty_R(\mathbb{S}^3) \), 
the above equation is equivalent to 
\eqref{eq: magnetic quant Euler Arnold}, 
which completes the proof.
\end{proof}
\subsection{The global quasi-geostrophic equations as magnetic Euler--Arnold equation}

Following the line of reasoning in~\cite[Rmk.~3.4]{ModinSurin2025}, 
we derive from \Cref{prop: magnetic Euler Arnold on quant} 
the global quasi-geostrophic equations as a magnetic Euler--Arnold equation. 
We begin by choosing an $\mathbb{S}^1$-invariant function 
\( \varphi \) in~\eqref{eq:trivial_cocycle}, 
that is, \( \varphi \in C^{\infty}_R(\mathbb{S}^3) \).

More precisely, for a given smooth function 
\( h \colon \mathbb{S}^2 \to \mathbb{R} \), 
we consider the function
\begin{equation}
\label{eq:phi-map}
\varphi \colon \mathbb{S}^2 \to \mathbb{R}, 
\quad (z, w) \mapsto \frac{2z}{\mathrm{Ro}} + \frac{2z h(z, w)}{\mathrm{Ro}},
\end{equation}
where \( \rho^2 = \gamma z^2 \), and 
\( \gamma \) and \( \mathrm{Ro} \) are as described in~\eqref{eq: global quasi-geostrophic equations}. 
We denote by \( \varphi \) its lift to \( \mathbb{S}^3 \) via the Hopf fibration.

For \( a = 1 \), and using~\eqref{eq: contact Laplacian}, 
equation~\eqref{eq: magnetic quant Euler Arnold} 
reduces to the equation on \( \mathbb{S}^2 \)
\begin{equation}
\label{eq:reduced-eq}
\partial_t\big((\gamma z^2 - \Delta) f\big) 
+ \left\{ f,\, (\gamma z^2 - \Delta) f 
+ \frac{2z}{\mathrm{Ro}} + 2z h \right\} = 0.
\end{equation}

Defining \( \psi = f \) and
\[
q = (\gamma z^2 - \Delta) f 
+ \frac{2z}{\mathrm{Ro}} + 2z h,
\]
and comparing with~\eqref{eq: global quasi-geostrophic equations}, 
we obtain from~\eqref{eq:reduced-eq} the following.

\begin{cor}\label{c: global quasi-geostrophic equations as magnetic}
The global quasi-geostrophic equations~\eqref{eq: global quasi-geostrophic equations} 
are the magnetic Euler--Arnold equations of the magnetic system 
\( \left( \mathcal{D}_q, \mathcal{G}^{A}, \Omega \right) \).\\
In particular, a curve \( \gc \) is a magnetic geodesic in 
\( \left( \mathcal{D}_q, \mathcal{G}^{A}, \Omega \right) \) 
if and only if the Eulerian velocity field 
\( u := \dgc \circ \gc^{-1} \) 
is a solution of~\eqref{eq: global quasi-geostrophic equations}.
\end{cor}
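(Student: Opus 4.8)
The plan is to obtain \Cref{c: global quasi-geostrophic equations as magnetic} as a direct specialization of \Cref{prop: magnetic Euler Arnold on quant}. Since that proposition already identifies the magnetic Euler--Arnold equation of $\left(\mathcal{D}_q, \mathcal{G}^{A}, a\cdot\Omega\right)$ as \eqref{eq: magnetic quant Euler Arnold}, the remaining task is purely one of matching data: choosing the inertia operator, the cocycle function $\varphi$, and the strength $a$ so that \eqref{eq: magnetic quant Euler Arnold} becomes \eqref{eq: global quasi-geostrophic equations} after descending along the Hopf fibration $\pi$.

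First I would fix the inertia operator by setting $\rho^2 = \gamma z^2$. By the computation of the contact Laplacian recorded in \eqref{eq: contact Laplacian}, this forces $\Delta_{\lambda, A} f = (\gamma z^2 - \Delta) f$ on Reeb-invariant functions, where $\Delta$ is the Laplacian of the base $\mathbb{S}^2$ lifted through $\pi$. I would then take $a = 1$ and choose the cocycle function $\varphi \in C^\infty_R(\mathbb{S}^3)$ to be the lift of the map \eqref{eq:phi-map} determined by the prescribed topography $h$. The $\mathbb{S}^1$-invariance of $\varphi$ is precisely what makes the cocycle $\Omega$ in \eqref{eq:trivial_cocycle} well-defined and guarantees that every term in \eqref{eq: magnetic quant Euler Arnold} descends to a function on $\mathbb{S}^2$.

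Next I would substitute these choices into \eqref{eq: magnetic quant Euler Arnold}. Using the skew-symmetry $-\{\varphi, f\} = \{f, \varphi\}$ of the contact bracket, the Lorentz term merges into the transport term, and \eqref{eq: magnetic quant Euler Arnold} collapses to the single scalar equation \eqref{eq:reduced-eq} on $\mathbb{S}^2$. Reading off $\psi := f$ and $q := (\gamma z^2 - \Delta) f + \tfrac{2z}{R_0} + 2z h$, I would compare \eqref{eq:reduced-eq} term by term with the potential-vorticity formulation \eqref{eq: global quasi-geostrophic equations}: the transport law $\partial_t q + \{\psi, q\} = 0$ and the constitutive relation for $q$ then match once the contact bracket on Reeb-invariant functions is identified with the Poisson bracket on $\mathbb{S}^2$. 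The characterization of magnetic geodesics follows from \Cref{t: magnetic Euler equation on g} applied to $u = S_\lambda f = \gc_t \circ \gc^{-1}$.

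The only genuine subtlety, rather than a true obstacle, is to verify that the choice $\rho^2 = \gamma z^2$ keeps $\Delta_{\lambda, A}$ invertible and elliptic after reduction to $\mathbb{S}^2$, so that the passage between the stream function $\psi$ and the potential vorticity $q$ is a genuine bijection and \eqref{eq:reduced-eq} is \emph{equivalent} to \eqref{eq: global quasi-geostrophic equations} rather than merely implied by it. This point, together with the bookkeeping of the sign and normalization conventions relating the contact bracket on $\mathbb{S}^3$ to the Poisson bracket on $\mathbb{S}^2$, is where care is needed; the required ellipticity and invertibility are already supplied by the discussion following \eqref{eq: contact Laplacian}.
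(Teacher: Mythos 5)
Your proposal is correct and follows essentially the same route as the paper: specialize \Cref{prop: magnetic Euler Arnold on quant} with $\rho^2=\gamma z^2$, $a=1$, and $\varphi$ the lift of \eqref{eq:phi-map}, use skew-symmetry of the bracket to absorb the Lorentz term into the transport term, and identify $\psi=f$, $q=(\gamma z^2-\Delta)f+\tfrac{2z}{R_0}+2zh$. Your added remarks on the invertibility of $\Delta_{\lambda,A}$ and the bracket identification are sensible but are already covered by the discussion following \eqref{eq: contact Laplacian}.
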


\begin{rem}\label{r: lorenz force global quasi}
By comparing \eqref{eq: lorenz quant group} and \eqref{eq:reduced-eq}, 
we may interpret the correction term 
\( \frac{2z}{\mathrm{Ro}} + 2z h \) in 
\eqref{eq: global quasi-geostrophic equations} 
as the Lorentz force acting on a charged particle 
in the infinite-dimensional magnetic system 
\( \left( \mathcal{D}_q, \mathcal{G}^{A}, \Omega \right) \).
\end{rem}

\subsection{Local and global well-posedness of \eqref{eq: global quasi-geostrophic equations}}

In the classical work of Ebin and Marsden~\cite{EM70}, 
building on Arnold’s geometric interpretation of the Euler equations~\cite{Arnold66}, 
well-posedness results were obtained via the geodesic formulation, 
establishing both local and global existence.

In this spirit, we adopt a similar geometric framework to establish 
well-posedness for the magnetic geodesic flow on 
\( \left( \mathcal{D}^{s}_q(\Sd), \mathcal{G}^{A}, \Omega \right) \). 
We fix the following notation: 
\( H^{s}_{R}(\mathbb{S}^d) \) denotes the Sobolev space of order \( s \) 
consisting of functions invariant under the Reeb flow, 
and \( \mathcal{D}^{s}_q(\Sd) \) denotes the group of quantomorphisms 
of Sobolev class \( s \).

The local well-posedness of the magnetic geodesic flow on 
\( \left( \mathcal{D}^{s}_q(\Sd), \mathcal{G}^{A}, a \cdot \Omega \right) \) 
follows line by line from \cite[Thm.~3.1]{ModinSurin2025}, 
which in turn builds on \cite{EP13,EP15}.

\begin{cor}
Magnetic geodesics of 
\( \left( \mathcal{D}^{s}_q(\Sd), \mathcal{G}^{A}, a\cdot \Omega \right) \) 
exist locally in time in the sense of the Picard--Lindelöf theorem. 
More precisely, for any initial data, there exists a non-empty maximal time interval 
\( (-T_a, T_b) \) on which a unique solution exists and depends smoothly 
on the initial data.
\end{cor}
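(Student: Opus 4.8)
The plan is to run the Ebin--Marsden argument~\cite{EM70} in the form already set up for the quantomorphism group by Modin and Surin~\cite[Thm.~3.1]{ModinSurin2025}, treating the magnetic contribution as a lower-order perturbation of the geodesic spray. First I would fix $s$ in the range for which $\mathcal{D}^{s}_q$ is a smooth Hilbert manifold and a topological group with smooth right translations on the relevant Sobolev completions, exactly as in~\cite{EP13,EP15,ModinSurin2025}. Writing the magnetic geodesic equation~\eqref{e:mg_alt} in Lagrangian variables on $\mathcal{D}^{s}_q$ puts it in spray form $\ddot{\gc} = \mathcal{S}(\gc,\dot{\gc}) + a\,\mathcal{Y}(\gc,\dot{\gc})$, where $\mathcal{S}$ is the geodesic spray of $\mathcal{G}^{A}$ and $\mathcal{Y}(\gc,\dot{\gc}) = Y_{\id}\big(\dot{\gc}\circ\gc^{-1}\big)\circ\gc$ is the chart representation of the right-invariant Lorentz force $Y_{\gc}\dot{\gc}$. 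By~\cite[Thm.~3.1]{ModinSurin2025} the spray $\mathcal{S}$ is a smooth vector field on $T\mathcal{D}^{s}_q$, so the only new point to verify is the smoothness of $\mathcal{Y}$.

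The key step is therefore to show that $\mathcal{Y}$ extends to a smooth vector field on $T\mathcal{D}^{s}_q$. At the identity, $Y_{\id}(S_\lambda f) = S_\lambda\big(\Delta_{\lambda,A}^{-1}\{\varphi,f\}\big)$ by~\eqref{eq: lorenz quant group}, and I would compare this with the adjoint term $S_\lambda\big(\Delta_{\lambda,A}^{-1}\{f,\Delta_{\lambda,A}f\}\big)$ from~\eqref{eq: adjoint quantomorph group}. Since $\varphi$ is a fixed smooth function, $f\mapsto\{\varphi,f\}$ is a first-order differential operator, whereas $f\mapsto\{f,\Delta_{\lambda,A}f\}$ involves three derivatives of its top-order argument; after applying $\Delta_{\lambda,A}^{-1}$, which gains two derivatives, and then $S_\lambda$, the Lorentz force is two derivatives more regular than the adjoint term that drives the delicate estimates of~\cite{ModinSurin2025}. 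Consequently the smoothness of the conjugated map $\gc\mapsto Y_{\id}\big(\cdot\circ\gc^{-1}\big)\circ\gc$ follows from the same composition-and-inversion (``no loss of derivatives'') lemmas of~\cite{EP13,EP15} already used for $\mathcal{S}$, with room to spare.

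Once $\mathcal{S}+a\,\mathcal{Y}$ is known to be a smooth vector field on the Hilbert manifold $T\mathcal{D}^{s}_q$, I would conclude by the Picard--Lindelöf theorem for ODEs on Banach manifolds: through each initial datum $(\gc(0),\dot{\gc}(0))$ passes a unique maximal integral curve, defined on an open interval $(-T_a,T_b)$ and depending smoothly on the initial data. By \Cref{t: magnetic Euler equation on g} and \Cref{prop: magnetic Euler Arnold on quant}, these integral curves are precisely the magnetic geodesics of $\big(\mathcal{D}^{s}_q,\mathcal{G}^{A},a\cdot\Omega\big)$, which yields the stated conclusion.

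I expect the main obstacle to be exactly the joint smoothness of $\mathcal{Y}$ in $(\gc,\dot{\gc})$ over all of $\mathcal{D}^{s}_q$, rather than merely at the identity: one must check that conjugating $S_\lambda\Delta_{\lambda,A}^{-1}\{\varphi,\cdot\}$ by a Sobolev-class quantomorphism stays smooth, which is the heart of the Ebin--Marsden mechanism. Because this magnetic term is strictly smoothing relative to the adjoint term, however, this should be a routine adaptation of the estimates already carried out for the geodesic spray, so that the corollary indeed follows essentially line by line from~\cite[Thm.~3.1]{ModinSurin2025} once the perturbation is accounted for.
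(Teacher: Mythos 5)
Your proposal is correct and takes essentially the same route as the paper, which simply asserts that the result follows line by line from the Ebin--Marsden-type argument of Modin--Surin \cite[Thm.~3.1]{ModinSurin2025} (itself based on \cite{EP13,EP15}); you supply the details the paper omits, in particular the correct observation that the right-translated Lorentz force $S_\lambda\Delta_{\lambda,A}^{-1}\{\varphi,\cdot\}$ is a smoothing, hence harmless, perturbation of the geodesic spray.
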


Furthermore, one obtains the following global well-posedness result by an argument 
that follows line by line from \cite[Thm.~3.2, Rmk.~3.4]{ModinSurin2025}:

\begin{cor}
For initial data \( f_0 \in H^{s+1}(\mathbb{S}^2, \mathbb{R}) 
\simeq H^{s+1}_{R}(\Sd) \) with \( s > 2 \), 
the solution of \eqref{eq: magnetic quant Euler Arnold} 
exists for all time.
\end{cor}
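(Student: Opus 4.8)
The plan is to recast \eqref{eq: magnetic quant Euler Arnold} as a pure transport equation for a potential-vorticity variable and then run the classical two-dimensional fluid continuation argument, following \cite{ModinSurin2025}. The crucial observation is that, since $\varphi$ is time-independent and the bracket is antisymmetric, the magnetic term may be rewritten as $-a\{\varphi, f\} = \{f, a\varphi\}$; setting $\psi := f$ and $q := \Delta_{\lambda, A} f + a\varphi$, equation \eqref{eq: magnetic quant Euler Arnold} becomes
\[
\partial_t q + \{\psi, q\} = 0 \quad \text{on } \SS^2 .
\]
Thus the magnetic forcing is absorbed into a fixed additive shift of the potential vorticity and leaves the transport structure of \eqref{eq: global quasi-geostrophic equations} untouched. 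This is precisely why the estimates of \cite{ModinSurin2025} transfer without change to the present magnetic setting.

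From here I would proceed in the standard way. First, the bracket generates advection along the divergence-free (Hamiltonian) vector field $S_\lambda \psi$, whose flow is area-preserving on $\SS^2$; hence every $L^p$-norm of $q$ is conserved, $\|q(t)\|_{L^p} = \|q(0)\|_{L^p}$, and in particular $\|q(t)\|_{L^\infty}$ stays bounded for all time. Second, since $\Delta_{\lambda, A} = \rho^2 - \Delta = \gamma z^2 - \Delta$ is an invertible elliptic operator on $\SS^2$ (as recorded after \eqref{eq: contact Laplacian}), one recovers the stream function $\psi = \Delta_{\lambda, A}^{-1}(q - a\varphi)$ with the usual two-derivative elliptic gain, thereby controlling the advecting velocity $S_\lambda \psi$ in terms of $q$.

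The global bound is then closed by a Beale--Kato--Majda type argument: differentiating the transport equation and using a commutator estimate yields a differential inequality for $\tfrac{d}{dt}\|q\|_{H^s}$, which is tamed by the logarithmic Sobolev inequality bounding $\|\nabla S_\lambda \psi\|_{L^\infty}$ in terms of $\|q\|_{L^\infty}$ and $\log\bigl(e + \|q\|_{H^s}\bigr)$. Grönwall's inequality then produces a (double-exponential) bound on $\|q(t)\|_{H^s}$ over any finite interval. Combined with the preceding local well-posedness corollary, which supplies a maximal interval $(-T_a, T_b)$ on which blow-up of the $H^s$-norm is the only obstruction to continuation, this finite bound forces $T_b = \infty$ (and symmetrically $T_a = \infty$), giving global existence for $f_0 \in H^{s+1}_{R}(\Sd)$ with $s > 2$.

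The \emph{main obstacle} is the geometric packaging rather than the PDE analysis: one must verify that the weighted contact Laplacian $\Delta_{\lambda, A}$ plays the role of $-\Delta$ throughout, i.e.\ that it is invertible on $\SS^2$, gains two derivatives, and is compatible with the product and commutator estimates despite the zeroth-order $\gamma z^2$ weight, and that the contact bracket genuinely generates an area-preserving flow so that the $L^p$ conservation in the first step is legitimate. Once these structural facts are in place, the magnetic term introduces no new difficulty, since it only shifts $q$ by the fixed function $a\varphi$.
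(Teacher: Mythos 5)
Your proposal is correct and takes essentially the same route as the paper: the paper absorbs the magnetic term into the potential vorticity exactly as you do (setting $q = \Delta_{\lambda,A}f + a\varphi$, cf.\ \eqref{eq:reduced-eq}) and then simply cites the transport/Beale--Kato--Majda continuation argument of \cite[Thm.~3.2, Rmk.~3.4]{ModinSurin2025}, which is the argument you spell out. Your write-up just supplies the details the paper delegates to that reference.
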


We conclude this section with a speculative remark.

\begin{rem}\label{rem: speculative}
It is well known that exact magnetic systems—i.e., magnetic systems 
\( (M, g, \sigma) \) in which the magnetic field is an exact two-form 
\( \sigma = \mathrm{d}\alpha \)—admit a Lagrangian formulation 
via an action functional (see, for example,~\cite{Abbo13Lect}). 
Critical points of this action functional correspond to magnetic geodesics.\\
In the case of the incompressible Euler equations, 
the action-functional formulation has led to the development of measure-valued solutions; 
see~\cite{DaneriFigalli2013} and the references therein. 
It would therefore be interesting to investigate whether a similar variational approach 
could be employed to define measure-valued solutions of 
\eqref{eq: global quasi-geostrophic equations}.
\end{rem}
\noindent\\
\textbf{Data availability} In this research no data is processed.\\
\noindent\\ 
\textbf{Declarations}\\\\
\textbf{Conflict of interest:} The author states that there is no conflict of interest.
\bibliographystyle{abbrv}
\bibliography{ref}
    \end{document}